\theoremstyle{plain}
\newtheorem*{bigtheo}{Theorem}
\newtheorem{theo}{Theorem}[section]
\newtheorem{prop}[theo]{Proposition}
\newtheorem{lemm}[theo]{Lemma}
\newtheorem{hypo}[theo]{Hypothesis}
\theoremstyle{definition}
\newtheorem{defi}[theo]{Definition}
\theoremstyle{remark}
\newtheorem{rema}[theo]{Remark}
\newcommand{\T}{\mathcal{T}}
\newcommand{\A}{\mathcal{A}}
\newcommand{\B}{\mathcal{B}}
\newcommand{\C}{\mathcal{C}}
\newcommand{\E}{\mathcal{E}}
\newcommand{\mF}{\mathbb{F}}
\newcommand{\F}{\mathcal{F}}
\newcommand{\Li}{\mathcal{L}}
\newcommand{\wF}{\widetilde{\F}}
\newcommand{\Oo}{\mathcal{O}}
\title{The compatibility with the duality for partial Hasse invariants}
\author{St\'ephane Bijakowski}
\begin{document}

\maketitle

\begin{abstract}
We give a simple and natural proof for the compatibility of the Hasse invariant with duality. We then study a $p$-divisible group with an action of the ring of integers of a finite ramified extension of $\mathbb{Q}_p$. We suppose that it satisfies the Pappas-Rapoport condition ; in that case the Hasse invariant is a product of partial Hasse invariants, each of which can be expressed in terms of primitive Hasse invariants. We then show that the dual of the $p$-divisible group naturally satisfies a Pappas-Rapoport condition, and prove the compatibility with the duality for the partial and primitive Hasse invariants.
\end{abstract}

\tableofcontents

\section*{Introduction}

A $p$-divisible group $G$ over a field of characteristic $p$ is said to be ordinary if it is an extension of a multiplicative part by an \'etale part. To detect the ordinariness, one can use the Hasse invariant $ha(G)$. It is a section of the $p-1$-th power of the sheaf of differentials $\omega_G$, and it is invertible if and only if the $p$-divisible group is ordinary. Since the dual of a multiplicative $p$-divisible group is \'etale, one sees that $G$ is ordinary if and only if its dual $G^D$ is. This suggests that the Hasse invariants of $G$ and $G^D$ are related. Actually, one has the following result.

\begin{bigtheo}
Let $S$ be a scheme of characteristic $p$, and let $G$ be a $p$-divisible group over $S$ of height $h$ and dimension $d$, with $ 0 < d < h$. Then we have an isomorphism $\omega_G^{p-1} \simeq \omega_{G^D}^{p-1}$. The elements $ha(G)$ and $ha(G^D)$ are identified under the induced isomorphism $H^0(S, \omega_G^{p-1}) \simeq H^0(S, \omega_{G^D}^{p-1}) $.
\end{bigtheo}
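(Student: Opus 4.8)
The plan is to realise both Hasse invariants inside the (contravariant) Dieudonné crystal and to exploit the fact that the crystal of $G^D$ is its dual. Write $M=\mathbb{D}(G)$ for the evaluation of the Dieudonné crystal of $G$, a locally free $\Oo_S$-module of rank $h$, equipped with Frobenius $\mathcal{F}\colon M^{(p)}\to M$, Verschiebung $\mathcal{V}\colon M\to M^{(p)}$ (with $\mathcal{F}\mathcal{V}=\mathcal{V}\mathcal{F}=p$), and the Hodge filtration $\omega_G\subset M$ with quotient $M/\omega_G\simeq\mathrm{Lie}(G^D)$. I would take as definition $ha(G)=\det\bigl(\mathcal{V}|_{\omega_G}\colon\omega_G\to\omega_G^{(p)}\bigr)\in H^0(S,(\det\omega_G)^{p-1})$ and, symmetrically, $ha(G^D)=\det\bigl(\overline{\mathcal F}\colon (M/\omega_G)^{(p)}\to M/\omega_G\bigr)\in H^0(S,(\det\omega_{G^D})^{p-1})$, using $\omega_{G^D}=\mathrm{Lie}(G^D)^{\vee}$. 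The key structural inputs are the duality $\mathbb{D}(G^D)\simeq M^{\vee}$ under which $\mathcal F$ and $\mathcal V$ are interchanged (transposed) and the Hodge filtration $\omega_{G^D}$ is sent to the orthogonal complement $\omega_G^{\perp}\subset M^{\vee}$; this is exactly what lets me rewrite $ha(G^D)$ in terms of the single package $(M,\mathcal F,\mathcal V,\omega_G)$ attached to $G$ alone.

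First I would construct the isomorphism $\omega_G^{p-1}\simeq\omega_{G^D}^{p-1}$. From $0\to\omega_G\to M\to\mathrm{Lie}(G^D)\to0$ and $\omega_{G^D}=\mathrm{Lie}(G^D)^{\vee}$ one gets the identity of line bundles
\[
(\det\omega_G)^{p-1}\simeq(\det M)^{p-1}\otimes(\det\omega_{G^D})^{p-1},
\]
so it suffices to produce a canonical trivialisation of $(\det M)^{p-1}$. This is where Frobenius enters: since the differential of the relative Frobenius vanishes, $\mathcal{F}$ is divisible by $p$ along the Hodge filtration, so the divided Frobenius $\mathcal{F}^{\flat}$ (equal to $\mathcal{F}$ off the filtration and to $\mathcal{F}/p$ on it) is an \emph{isomorphism} $M^{(p)}\xrightarrow{\sim}M$. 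Taking top exterior powers gives a canonical isomorphism $(\det M)^{(p)}\simeq\det M$, i.e. a trivialisation $\theta\colon(\det M)^{p-1}\xrightarrow{\sim}\Oo_S$, and I define the isomorphism of the theorem to be $\theta$ tensored into the displayed identity.

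It then remains to check that $\theta$ carries $ha(G)$ to $ha(G^D)$. Working in a local frame adapted to $\omega_G\subset M$, the map $\mathcal F^{\flat}$ is block upper triangular, say with diagonal blocks $A=\mathcal{F}^{\flat}|_{\omega_G^{(p)}}$ and $B=\overline{\mathcal F^{\flat}}$ on the quotient; since dividing by $p$ only alters the sub, $B=\overline{\mathcal F}$ and $\det B=ha(G^D)$, while $\det A\cdot\det B=\det\mathcal F^{\flat}$ is the unit defining $\theta$. Finally the relation $\mathcal V\mathcal F=p$, restricted to the filtration, shows $\mathcal V|_{\omega_G}\circ A=\mathrm{id}$, whence $\det A=ha(G)^{-1}$; combining the two gives $\det\mathcal F^{\flat}=ha(G)^{-1}\,ha(G^D)$, so that under the normalisation $\theta$ (in which $\det\mathcal F^{\flat}$ becomes the identity) one obtains precisely $ha(G)=ha(G^D)$. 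The main obstacle is the rigorous handling of the divided Frobenius in characteristic $p$: the Hodge filtration is only preserved by $\mathcal F$ and $\mathcal V$ modulo $p$, so the manipulations "$\mathcal F/p$" and "$\mathcal V\mathcal F/p=\mathrm{id}$ on the filtration" are not literal over $S$. I expect to resolve this by the crystalline formalism, evaluating on a $p$-adic thickening on which $G$ lifts and choosing a Frobenius lift, where strong divisibility makes $\mathcal F^{\flat}$ a genuine isomorphism and all identities hold integrally; the statement over an arbitrary $S$ then follows by the compatibility of all constructions with base change, after reducing to the universal ($p$-divisible group) situation, which is flat and liftable so that the identity of sections can be checked there and transported back.
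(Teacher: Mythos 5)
Your structural idea --- realizing both Hasse invariants on the Dieudonn\'e crystal $M$ of $G$ and reducing the theorem to a canonical trivialization of $(\det M)^{p-1}$ --- is sound, and such a trivialization does exist. The gap is in the computation you build on it: the divided Frobenius is \emph{not} block upper triangular with respect to the Hodge filtration. On a $p$-adic thickening one indeed has $F(\mathrm{Fil}^{(p)})\subset pM$, but $\tfrac1pF$ does not carry $\mathrm{Fil}^{(p)}$ back into $\mathrm{Fil}$: from $VF=p$ one gets $V\circ(\tfrac1pF)=\mathrm{id}$ on $\mathrm{Fil}^{(p)}$, so the reduction of $\tfrac1pF|_{\mathrm{Fil}^{(p)}}$ modulo $p$ is a \emph{section} of the surjection $V\colon\E\twoheadrightarrow\F^{(p)}$, and its image is therefore a complement of $\wF=\operatorname{Im}F=\ker V$ in $\E$ (over a perfect field, where the preimage of the filtration is $VM$, one has $\tfrac1pF(VM)=M$). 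Asking that this image lie in $\F$, which is what your triangularity asserts and what is needed for $\det A$ and $\overline{F^{\flat}}$ to even be defined, forces $V|_{\F}\colon\F\to\F^{(p)}$ to be surjective, i.e.\ forces $ha(G)$ to be invertible; and your identities $\det F^{\flat}=\det A\cdot\det B$, $\det A=ha(G)^{-1}$, $\det B=ha(G^D)$ with $\det F^{\flat}$ a unit likewise make $ha(G)$ and $ha(G^D)$ units. So the computation is valid only over the ordinary locus. Your concluding step --- check the identity in a liftable universal situation and transport it back --- would then have to proceed by density of the ordinary locus plus descent, which is exactly the argument of Fargues and Conrad that this paper is written to replace; it is not a repair of the local computation but a retreat to the old proof.

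What is salvageable is instructive to compare with the paper. The trivialization of $(\det\E)^{p-1}$ needs no crystalline lifts at all: in characteristic $p$ the conjugate filtration $\wF$ comes with isomorphisms $F\colon(\E/\F)^{(p)}\simeq\wF$ and $V\colon\E/\wF\simeq\F^{(p)}$, whence
$$\det\E\simeq\det\F\otimes\det(\E/\F)\qquad\text{and}\qquad\det\E\simeq\det\wF\otimes\det(\E/\wF)\simeq\bigl(\det\F\otimes\det(\E/\F)\bigr)^{\otimes p},$$
so $(\det\E)^{p-1}\simeq\Oo_S$, which is the isomorphism $\Li_G^{p-1}\simeq\Li_{G^D}^{p-1}$ of the theorem; this is also what your $\det F^{\flat}$ computes after reduction mod $p$, the correct triangular structure being relative to a complement of $\F^{(p)}$ in the source and to $\wF$ in the target, not to the Hodge filtration on both sides. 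But the equality of sections is then a genuinely separate step: $ha(G)$ is the determinant of the natural map $\F\to\E/\wF$ and $ha(G^D)$ that of $\wF\to\E/\F$, and one must show these agree under the isomorphism above. The paper isolates precisely this as Proposition \ref{dual}: for local direct summands $\B,\C\subset\A$ of complementary ranks, the determinants of $\C\to\A/\B$ and of $\B\to\A/\C$ coincide, both being the determinant of $\B\oplus\C\to\A$. Your proposal contains no counterpart of this comparison --- the conjugate filtration never appears in it --- and that comparison, not the trivialization, is the actual content of the theorem.
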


This theorem has been proved in \cite{Fa}, proposition $2$. In \cite{Co} Th. $2.3.5$, it is proved that the sections $ha(G)$ and $ha(G^D)$ generate the same ideals. However, both proofs are very little natural. Indeed, they first deal with the case where the ordinary locus is dense, and then use a descent argument to prove the general case. \\
We give here a simple and natural proof of this result. For this, we use the first de Rham cohomology group $\E$ of $G$ ; it is a locally free sheaf of rank $h$ over $S$. It is equipped with two filtrations~:~the Hodge filtration $\F$ and the conjugate filtration $\wF$. With these notations, one can show that the section $ha(G)$ is induced by the determinant of the natural map $\F \to \E / \wF$, whereas the section $ha(G^D)$ is induced by the determinant of the map $\wF \to \E / \F$. A simple argument allows us to compare these two sections and thus prove the previous theorem. \\
$ $\\
\indent Suppose now that $G$ has an action of the ring of integers of $K$, where $K$ is a finite extension of $\mathbb{Q}_p$. Suppose for simplicity that $K$ is totally ramified of degree $e$ (the general case is treated in the paper), and let $\pi$ be an uniformizer. We assume that $G$ satisfy the Pappas-Rapoport condition (\cite{P-R}) : there exists a filtration
$$0 = \omega_{G}^{[0]} \subset \omega_{G}^{[1]} \subset \dots \subset \omega_{G}^{[e-1]} \subset \omega_{G}^{[e]} = \omega_{G}$$
such that $\omega_{G}^{[i]} / \omega_{G}^{[i-1]} $ is locally free of rank $d_1$, and $\pi \cdot \omega_{G}^{[i]} \subset \omega_{G}^{[i-1]} $ for all $1 \leq i \leq e$. Here $d_1$ is a fixed integer, and the condition implies that $e d_1 = d$. Define  $\Li_G^{[i]} := \det (\omega_{G}^{[i]}  / \omega_{G}^{[i-1]} )$ for $1 \leq i \leq e$. Then one can show that the Hasse invariant of $G$ is the product of the partial Hasse invariants $ha^{[i]} (G)$ for $1 \leq i \leq e$. The partial Hasse invariant $ha^{[i]} (G)$ is a section of $(\Li_G^{[i]})^{p-1}$ for all integers $i$. Moreover, each of these partial Hasse invariants is a product of the primitive Hasse invariants $hasse(G)$ and $m^{[i]} (G)$ for $2 \leq i \leq e$. The element $m^{[i]} (G)$ is a section of $\Li_G^{[i-1]} (\Li_G^{[i]})^{-1}$, and $hasse(G)$ is a section of $(\Li_G^{[e]})^p (\Li_G^{[1]})^{-1}$. These invariants have been defined in \cite{R-X} for the Hilbert modular variety, and in \cite{Bi} for the general case. We then get the following result.

\begin{bigtheo}
The $p$-divisible group $G^D$ satisfies canonically a Pappas-Rapoport condition. Moreover, the partial Hasse invariants $ha^{[i]} (G)$ for $1 \leq i \leq e$, and the primitive Hasse invariants $hasse(G)$ and $m^{[i]} (G)$ for $2 \leq i \leq e$ are compatible with the duality.
\end{bigtheo}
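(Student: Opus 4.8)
The plan is to transport the whole question into the first de~Rham cohomology sheaf $\E=\E_G$, exactly as in the proof of the first theorem, but now keeping track of the $\Oo_K$-action. First I would record that the action makes $\E$ a locally free module over $R:=\Oo_S\otimes_{\mathbb{Z}_p}\Oo_K$, which in characteristic $p$ is $\Oo_S[\pi]/(\pi^e)$, and that the canonical perfect pairing $\E_G\times\E_{G^D}\to\Oo_S$ furnished by the first theorem is $\Oo_K$-balanced, i.e. $\langle\pi x,y\rangle=\langle x,\pi y\rangle$. Under this pairing the Hodge (resp. conjugate) filtration of $G^D$ is the annihilator of the Hodge (resp. conjugate) filtration of $G$; in particular $\omega_{G^D}=\F^{\perp}$, where $\F=\omega_G$. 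Since $R$ is a Frobenius algebra, the $\Oo_S$-dual of a free $R$-module is again free, so $\E_{G^D}$ is again free over $R$ of the same rank and the two filtered objects are interchanged symmetrically. This promotes the entire setup of the first theorem to the $\Oo_K$-equivariant world.

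Next I would construct the Pappas--Rapoport filtration on $G^D$ by the explicit formula
$$\omega_{G^D}^{[j]}:=\pi^{\,e-j}\,(\omega_G^{[j]})^{\perp}\subseteq\F^{\perp}=\omega_{G^D},\qquad 0\le j\le e.$$
The balancing relation gives $\pi\,(\omega_G^{[i]})^{\perp}\subseteq(\omega_G^{[i+1]})^{\perp}$, from which one reads off directly that this is an increasing filtration with $\omega_{G^D}^{[0]}=0$, $\omega_{G^D}^{[e]}=\omega_{G^D}$, and $\pi\,\omega_{G^D}^{[j]}\subseteq\omega_{G^D}^{[j-1]}$. The crucial point is that the graded pieces are locally free of the expected rank $d_1^D:=h/e-d_1$. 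Here I would use the defining property $\pi^{j}\omega_G^{[j]}\subseteq\omega_G^{[0]}=0$, that is $\omega_G^{[j]}\subseteq\ker(\pi^{j}|\E)=\pi^{\,e-j}\E$, which together with the freeness of $\E$ and $\E_{G^D}$ over $R$ collapses the relevant intersection and yields $\mathrm{rk}\,\omega_{G^D}^{[j]}=j\,d_1^D$ on every fibre. Constant fibral rank then upgrades, after a check of compatibility with base change, to local freeness of each $\omega_{G^D}^{[j]}$ and of each graded piece; this is exactly the Pappas--Rapoport condition for $G^D$, and it is canonical since the construction used only the datum $\omega_G^{[\bullet]}$.

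I would then extract the identification underlying everything that follows. The surjection $\pi^{\,e-j}\colon(\omega_G^{[j]})^{\perp}\twoheadrightarrow\omega_{G^D}^{[j]}$ has kernel $(\pi^{\,e-j}\E)^{\perp}$, which is contained in $(\omega_G^{[j]})^{\perp}$ precisely because $\omega_G^{[j]}\subseteq\pi^{\,e-j}\E$; this produces the canonical isomorphism $\omega_{G^D}^{[j]}\cong(\pi^{\,e-j}\E_G/\omega_G^{[j]})^{\vee}$. Passing to graded pieces and determinants yields canonical isomorphisms between the line bundles $\Li^{[\bullet]}$ attached to $G$ and those attached to $G^D$ (up to an explicit reindexing, which this description pins down), compatible with the comparison through $\det\E$ used in the first theorem, where $\det\F\cdot\det(\E/\F)=\det\E=\det\wF\cdot\det(\E/\wF)$. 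The same construction applied to the conjugate filtration, using that $\wF_{G^D}$ is the annihilator of $\wF_G$, equips $\wF$ with the compatible refinement that the primitive invariant $hasse$ will see.

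Finally, for the compatibility of the invariants themselves I would write each as the determinant of an explicit $\Oo_S$-linear map between graded pieces and show that the map for $G^D$ is, under the pairing and the isomorphism above, the transpose of the map for $G$; since the determinant is transpose-invariant, the sections are identified. Concretely $m^{[i]}(G)=\det\!\big(\bar\pi\colon\mathrm{gr}^{[i]}\omega_G\to\mathrm{gr}^{[i-1]}\omega_G\big)$, and because $\langle\pi x,y\rangle=\langle x,\pi y\rangle$ the corresponding map for $G^D$ is the transpose of $\bar\pi$ acting on $(\pi^{\,e-j}\E_G/\omega_G^{[j]})^{\vee}$, giving the compatibility of the $m^{[i]}$ under the reindexing. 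For $hasse$ one runs the same argument with the Verschiebung in place of $\pi$: the map is $\sigma$-linear, which is what accounts for the $p$-th power in its target line bundle, and one uses $V_{G^D}=F_G^{\vee}$ and $F_{G^D}=V_G^{\vee}$ to identify $hasse(G^D)$ with the transpose of $hasse(G)$. The partial invariants $ha^{[i]}$ then follow either from their expression as products of $hasse$ and the $m^{[j]}$ recalled in the introduction, or directly by refining along the filtration the identity $ha(G)=ha(G^D)$ of the first theorem. The hard part will be the bookkeeping in the $hasse$ case: keeping the Frobenius/$\sigma$-twist and the reindexing consistent, and verifying that the transpose identification is the dual map on the nose rather than off by a duality sign.
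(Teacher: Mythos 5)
Your construction of the Pappas--Rapoport filtration on $G^D$ is sound and is in substance the paper's own: your $\omega_{G^D}^{[j]}=\pi^{e-j}\,(\omega_G^{[j]})^{\perp}$ coincides with the annihilator of the paper's extended flag $\F^{[2e-j]}=(\pi^{e-j})^{-1}\F^{[j]}$, and your identification $\omega_{G^D}^{[j]}\cong(\pi^{e-j}\E/\omega_G^{[j]})^{\vee}$ is the same as the paper's $\omega_{G^D}^{[j]}\simeq(\E/\F^{[2e-j]})^{\vee}$. The genuine gap is in your final paragraph, where all the compatibilities are supposed to be established by transposition. That mechanism cannot work, for a rank reason: $M^{[j]}(G)$ is a map between bundles of rank $d_1$, while $M^{[j]}(G^D)$ is a map between bundles of rank $h_1-d_1$ (where $h_1=h/e$), and similarly for $Hasse(G)$ versus $Hasse(G^D)$ and $Ha^{[j]}(G)$ versus $Ha^{[j]}(G^D)$; two maps between bundles of different ranks are never transposes of one another. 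What transpose-invariance of determinants legitimately buys you is only the tautological step $m^{[j]}(G^D)=\det\bigl({M^{[j]}(G^D)}^{\vee}\bigr)$, which transports the problem to the $\E_G$-side: ${M^{[j]}(G^D)}^{\vee}$ is multiplication by $\pi$ from $\F^{[2e+2-j]}/\F^{[2e+1-j]}$ to $\F^{[2e+1-j]}/\F^{[2e-j]}$. You are then left with the actual content of the theorem: showing that the determinant of this map equals the determinant of $\pi\colon\F^{[j]}/\F^{[j-1]}\to\F^{[j-1]}/\F^{[j-2]}$, i.e.\ $m^{[j]}(G)$, as sections of canonically isomorphic line bundles --- two determinants of \emph{different} maps on \emph{different} subquotients. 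This does not follow formally from the pairing. The paper proves it by exhibiting both maps as ``inclusion followed by projection'' inside the common ambient sheaf ${\F^{[j-1]}}'/\F^{[j-1]}$, where ${\F^{[j]}}':=\pi^{-1}\F^{[j]}$, relative to the two complementary subsheaves $\B=\F^{[j]}/\F^{[j-1]}$ and $\C={\F^{[j-2]}}'/\F^{[j-1]}$, and then invoking Proposition \ref{dual}: for complementary $\B,\C\subset\A$, the determinants of $\C\to\A/\B$ and $\B\to\A/\C$ agree under the canonical isomorphism $\det(\A/\B)\otimes\det(\C)^{-1}\simeq\det(\A/\C)\otimes\det(\B)^{-1}$. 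Nothing in your proposal plays the role of this lemma, and without it (or a substitute) the equality of sections is unproved.

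The $hasse$ case has a second, independent gap: your phrase ``one runs the same argument with the Verschiebung in place of $\pi$'' hides the one step of the whole theorem that is not linear algebra. To compare $Hasse(G)$ and $Hasse(G^D)^{\vee}$ inside $\E[\pi]$ one must know how the refined conjugate filtration interacts with $\pi$, namely $\pi^{j}\cdot\wF^{[e+j]}=\wF^{[e-j]}$. This rests on the identity $V\circ F=p=\pi^{e}u$, which is invisible on $\E$ itself (there $V\circ F=0$) and has to be checked on a lift of the Dieudonn\'e module to a PD-thickening --- $A_{cris}$ of a semi-perfect ring, or a smooth lift over the Witt vectors. This is precisely why the theorem for $hasse$ carries the hypothesis that $S$ is smooth over $\mF_p$ or locally the spectrum of a semi-perfect ring, a hypothesis your proposal never mentions. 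Finally, your fallback for the partial invariants $ha^{[j]}$ --- ``refining along the filtration the identity $ha(G)=ha(G^D)$'' --- does not work either, since compatibility of a product of sections says nothing about the individual factors; the correct route is the one you mention first, the product formula of Proposition \ref{prod}, but it presupposes both primitive cases, which in your write-up remain unestablished.
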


When we say for example that the section $m^{[i]} (G)$ is compatible with the duality, it means that there is a isomorphism between the sheaves $\Li_G^{[i-1]} (\Li_{G}^{[i]})^{-1}$ and $\Li_{G^D}^{[i-1]} (\Li_{G^D}^{[i]})^{-1}$, and that the two sections $m^{[i]} (G)$ and $m^{[i]} (G^D)$ are equal under the induced isomorphism. \\
The result for the sections $m^{[i]} (G)$ is proved unconditionally, but for the sections $hasse(G)$, we need to use crystalline Dieudonn\'e theory at some point, so we assume that $S$ is either smooth over $\mF_p$ or locally the spectrum of a semi-perfect ring. Note that for the special case of a $p$-divisible group defined over the ring of integers of an extension of $\mathbb{Q}_p$, the fact that the valuations of $m^{[i]} (G)$ and $m^{[i]} (G^D)$ are the same has been proved by explicit computations in \cite{Bi}. \\
$ $\\
Let us talk briefly about the organization of the paper. In the first section, a new simple proof for the compatibility of the Hasse invariant with duality is given. The second part deals with the case of an unramified action. Finally, in the third section we study the general case for $p$-divisible groups satisfying the Pappas-Rapoport condition. \\
$ $\\
The author would like to thank Valentin Hernandez and James Newton for interesting discussions.

\section{The Hasse invariant}

Let $S$ be a $\mF_p$-scheme, and $G$ be a $p$-divisible over $S$ of height $h$ and dimension $d$, with $0 < d <h$. Let us denote $\E = H_{dR}^1 (G)$ ; it is a locally free sheaf over $S$ of rank $h$. It is also the evaluation of the contravariant Dieudonn\'e crystal on $S$ (see \cite{BBM} section $3.3$). The Frobenius and the Verschiebung induce maps $F : \E^{(p)} \to \E$ and $V : \E \to \E^{(p)}$, where $\E^{(p)}$ denotes the twist of $\E$ by the Frobenius. The Hodge filtration gives a subsheaf $\F \subset \E$, locally free of rank $d$, and which induces the exact sequence (see \cite{BBM} corollary $3.3.5$)
$$0 \to \omega_G \to \E \to \omega_{G^D}^\vee \to 0$$
where $\omega_G$ is the conormal sheaf of $G$ along its unit section, $G^D$ is the Cartier dual of $G$, and the notation $\mathcal{G}^\vee$ means the dual of the sheaf $\mathcal{G}$. Moreover, $\F^{(p)} = $ Im $V = $ Ker $F$ (see \cite{EV} section $3.1$). Let $\widetilde{\F}$ denote the subsheaf of $\E$ defined by Ker $V = $ Im $F$ ; we will call $\wF$ the conjugate filtration. The Frobenius and Verschiebung induce isomorphisms

\begin{displaymath}
F : (\E / \F)^{(p)} \simeq \widetilde{\F}      \qquad  \qquad   V : \E / \widetilde{\F} \simeq \F^{(p)}
\end{displaymath}

\noindent The sheaf $\wF$ is thus locally free of rank $h - d$ and induces an exact sequence
$$0 \to (\omega_{G^D}^\vee)^{(p)} \to \E \to \omega_G^{(p)} \to 0$$

\begin{defi}
The Hasse map $Ha(G)$ for $G$ is the map $V : \omega_G \to \omega_G^{(p)}$. Let $\Li_G$ be the invertible sheaf defined by $\det \omega_G$. The determinant of the Hasse map induces a section $ha(G) \in H^0(S, \Li_G^{p-1})$, called the Hasse invariant of $G$.
\end{defi}

\noindent The sheaf $H_{dR}^1 (G^D)$ is $\E^\vee$, and the Hodge filtration on this space is induced by $\F^\bot$ (see \cite{BBM} section $5.3$). Moreover, the Frobenius and Verschiebung are given respectively by
$$V^\vee : (\E^\vee)^{(p)} \to \E^\vee \qquad \qquad F^\vee : \E^\vee \to (\E^\vee)^{(p)}$$
The map $Ha(G^D)^\vee$ is thus the map $F : (\E / \F)^{(p)} \to \E / \F$. 

\begin{lemm}
Using the previous isomorphisms, the Hasse map $Ha(G)$ is the natural map $\F \to~\E / \wF$, obtained by composing the inclusion of $\F$ in $\E$ with the projection to $\E / \wF$. Similarly, the map $Ha(G^D)^\vee$ is the natural map $\wF \to \E / \F$.
\end{lemm}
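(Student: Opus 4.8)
The plan is to observe that both assertions are purely formal consequences of the factorizations of $V$ and $F$ through their kernels and images, once the Hodge filtration is identified with $\omega_G$. Recall that $\F = \omega_G$, so the target $\omega_G^{(p)}$ of the Hasse map is precisely $\F^{(p)} = \mathrm{Im}\, V$. I would start from the definition $Ha(G) = V : \omega_G \to \omega_G^{(p)}$, that is, the restriction of $V : \E \to \E^{(p)}$ to $\F$, which indeed lands in $\F^{(p)}$ since $V(\F) \subset V(\E) = \F^{(p)}$. Now, because $\mathrm{Ker}\, V = \wF$, the map $V$ factors as $\E \twoheadrightarrow \E/\wF \xrightarrow{\;\sim\;} \F^{(p)} \hookrightarrow \E^{(p)}$, where the middle isomorphism is exactly the map $V : \E/\wF \simeq \F^{(p)}$ recorded before the lemma. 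Restricting this factorization along $\F \hookrightarrow \E$ and dropping the final inclusion, $Ha(G)$ becomes the composite of $\F \hookrightarrow \E \twoheadrightarrow \E/\wF$ with the isomorphism $V : \E/\wF \simeq \F^{(p)} = \omega_G^{(p)}$. Hence, under this isomorphism, $Ha(G)$ is identified with the natural map $\F \to \E/\wF$, which is the first claim.

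For the second assertion, I would invoke the identification established just above the statement, namely that $Ha(G^D)^\vee$ is the map $(\E/\F)^{(p)} \to \E/\F$ induced by $F$. The argument is then symmetric to the first one: since $\mathrm{Ker}\, F = \F^{(p)}$ and $\mathrm{Im}\, F = \wF$, the Frobenius factors as $(\E/\F)^{(p)} \xrightarrow{\;\sim\;} \wF \hookrightarrow \E \twoheadrightarrow \E/\F$, where the first isomorphism is the map $F : (\E/\F)^{(p)} \simeq \wF$ recorded before the lemma. Reading off this factorization, $Ha(G^D)^\vee$ becomes, under that isomorphism, the composite $\wF \hookrightarrow \E \twoheadrightarrow \E/\F$, i.e. the natural map $\wF \to \E/\F$.

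The computation is essentially tautological, so there is no deep obstacle; the only point requiring care is the bookkeeping of directions and Frobenius twists. I would make sure the isomorphisms used are precisely $V : \E/\wF \simeq \F^{(p)}$ and $F : (\E/\F)^{(p)} \simeq \wF$ as stated, so that no inverse is inadvertently introduced, and I would double-check that dualization commutes with the twist by Frobenius (so that $(\E^\vee)^{(p)} = (\E^{(p)})^\vee$ and likewise for the relevant subquotients), which is what legitimizes the reduction to the $F$-induced map in the dual case. No further input—in particular no crystalline Dieudonné theory—is needed at this stage.
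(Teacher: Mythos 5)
Your proof is correct and takes essentially the same approach as the paper: both identify $Ha(G)$ with the natural map $\F \to \E/\wF$ via the isomorphism $V : \E/\wF \simeq \F^{(p)}$, and $Ha(G^D)^\vee$ with the natural map $\wF \to \E/\F$ by precomposing with the inverse of $F : (\E/\F)^{(p)} \simeq \wF$. Your write-up merely spells out the factorizations of $V$ and $F$ through their kernels and images more explicitly than the paper's terse two-line argument.
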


\begin{proof}
The Hasse map is by definition the map induced by the Verschiebung $\F \to \F^{(p)}$. Since the inverse of the Verschiebung gives an isomorphism $\F^{(p)} \simeq \E / \widetilde{\F}$, the claim follows. The map $Ha(G^D)^\vee$ is the map $F : (\E / \F)^{(p)} \to \E / \F$. The composition of the inverse of the Frobenius $\wF \simeq (\E / \F)^{(p)}$ with this map gives the natural map $\wF \to \E / \F$.
\end{proof}

Before proving the duality compatibility for the Hasse invariant, let us state a general proposition, which will be useful throughout the paper.

\begin{prop} \label{dual}
Let $\mathcal{A}$ be a locally free sheaf of rank $r$ over $S$, and let $0 < s < r$ be an integer. Let $\B \subset \A$ and $\C \subset \A$ be two locally free sheaves of rank respectively $s$ and $r-s$, such that $\A / \B$ and $\A/ \C$ are locally free. Then we have an isomorphism of invertible sheaves
$$\det(\A / \B) \otimes \det(\C)^{-1} \simeq \det(\A / \C) \otimes \det(\B)^{-1}$$
Let $x \in H^0(S, \det(\A / \B) \otimes \det(\C)^{-1})$ be the section corresponding to the determinant of the natural map $\C \to \A / \B$. Then $x$ is mapped to $y$ under the isomorphism
$$H^0(S, \det(\A / \B) \otimes \det(\C)^{-1}) \simeq H^0(S, \det(\A / \C) \otimes \det(\B)^{-1})$$
where $y \in H^0(S, \det(\A / \C) \otimes \det(\B)^{-1})$ is the section corresponding to the determinant of the natural map $\B \to \A / \C$.
\end{prop}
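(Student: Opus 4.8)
The plan is to compare both sections to a single, manifestly canonical section attached to the inclusion map $\B\oplus\C\to\A$, and to build the isomorphism of the statement so that this comparison is sign-coherent. First I would record that the ranks are arranged so that both determinants make sense: $\C$ and $\A/\B$ both have rank $r-s$, while $\B$ and $\A/\C$ both have rank $s$. The two short exact sequences $0\to\B\to\A\to\A/\B\to0$ and $0\to\C\to\A\to\A/\C\to0$ give canonical isomorphisms $\det\A\simeq\det\B\otimes\det(\A/\B)$ and $\det\A\simeq\det\C\otimes\det(\A/\C)$. Combined with $\det(\B\oplus\C)\simeq\det\B\otimes\det\C$, both $\det(\A/\B)\otimes\det(\C)^{-1}$ and $\det(\A/\C)\otimes\det(\B)^{-1}$ become canonically isomorphic to $\det\A\otimes\det(\B\oplus\C)^{-1}$, and I would define the isomorphism of the proposition to be the resulting composite through this common line bundle.

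For the sections, consider the map $\phi\colon\B\oplus\C\to\A$ given by the two inclusions; since both sheaves are locally free of rank $r$, its determinant is a canonical global section $\delta\in H^0(S,\det\A\otimes\det(\B\oplus\C)^{-1})$. I would then identify $\delta$ with $x$ and with $y$. Composing $\phi$ with the projection $\A\to\A/\B$ exhibits a morphism from the short exact sequence $0\to\B\to\B\oplus\C\to\C\to0$ to $0\to\B\to\A\to\A/\B\to0$ whose left vertical arrow is the identity and whose right vertical arrow is precisely the map $\C\to\A/\B$ defining $x$; multiplicativity of the determinant along short exact sequences then yields that $\delta$ maps to $x$ under the isomorphism of the first paragraph. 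Composing instead with $\A\to\A/\C$ and using $0\to\C\to\B\oplus\C\to\B\to0$ identifies $\delta$ with $y$ in the same way. Hence $x$ and $y$ both correspond to $\delta$, and therefore to each other.

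The step that I expect to require genuine care is the compatibility of the two determinant identifications, i.e. the sign bookkeeping. The computation of $x$ naturally writes $\det(\B\oplus\C)$ as $\det\B\otimes\det\C$, whereas the computation of $y$ naturally writes it as $\det\C\otimes\det\B$, and these two trivializations of the same line bundle differ by $(-1)^{s(r-s)}$. This is exactly why the comparison must be routed through the unordered line bundle $\det(\B\oplus\C)^{-1}$ (equivalently, through $\delta$): a naive identification that commutes the factors $\det(\B)^{-1}$ and $\det(\C)^{-1}$ past one another would instead send $x$ to $(-1)^{s(r-s)}y$. To make the determinant-multiplicativity statements and this sign completely explicit, I would localize so that $\A$ is free and $\B,\C$ are direct summands, choose bases of $\A$ adapted respectively to $\B$ and to $\C$, and compute each determinant as that of a block matrix of the form $\left(\begin{smallmatrix} I & \ast \\ 0 & \ast \end{smallmatrix}\right)$; this simultaneously verifies $\delta=x$ and $\delta=y$ and fixes the sign convention once and for all.
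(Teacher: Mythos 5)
Your proposal is correct and is essentially the paper's own proof: both identify the two line bundles with the common bundle $\det(\A)\otimes\det(\B)^{-1}\otimes\det(\C)^{-1}$ and observe that $x$ and $y$ both correspond to the determinant of the inclusion map $\B\oplus\C\to\A$. The only difference is that you make explicit the sign bookkeeping (routing through $\det(\B\oplus\C)$ itself rather than through an ordered tensor decomposition), a point the paper's terser proof leaves implicit.
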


\begin{proof}
We have isomorphisms of invertible sheaves
$$\det(\A) \simeq \det(\A / \B) \otimes \det(\B) \simeq \det(\A / \C) \otimes \det(\C)$$
so the invertible sheaves $\det(\A / \B) \otimes \det(\C)^{-1}$ and $\det(\A / \C) \otimes \det(\B)^{-1}$ are both isomorphic to 
$$\det(\A) \otimes \det(\B)^{-1} \otimes \det(\C)^{-1}$$
We thus have isomorphisms
$$H^0(S, \det(\A / \B) \otimes \det(\C)^{-1}) \simeq H^0(S, \det(\A) \otimes \det(\B)^{-1} \otimes \det(\C)^{-1}) \simeq H^0(S, \det(\A / \C) \otimes \det(\B)^{-1})$$
One then sees that the elements $x$ and $y$ are mapped to the same element in $H^0(S, \det(\A) \otimes~\det(\B)^{-1} \otimes~\det(\C)^{-1})$. Namely, they are mapped to the section induced by the determinant of the map
$$\B \oplus \C \to \A$$
\end{proof}

\noindent By a slight abuse of notation, we will say that $x=y$ under the isomorphism of invertible sheaves $\det(\A / \B) \otimes \det(\C)^{-1} \simeq~\det(\A / \C) \otimes \det(\B)^{-1}$. This proposition allows us to give a simple proof for the compatibility with the duality of the Hasse invariant, obtained in \cite{Fa} proposition $2$.

\begin{theo}
There is an isomorphism $\Li_G^{p-1} \simeq \Li_{G^D}^{p-1}$. With this isomorphism, one has the equality $ha(G) = ha(G^D)$.
\end{theo}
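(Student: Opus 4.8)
The plan is to let Proposition \ref{dual} and the preceding Lemma do essentially all of the work, so that no argument about density of the ordinary locus or descent is required. First I would use the Lemma to transport both Hasse maps inside $\E$: the map $Ha(G)$ becomes the natural map $\F \to \E/\wF$, and $Ha(G^D)^\vee$ becomes the natural map $\wF \to \E/\F$. Taking determinants, $ha(G)$ is the section attached to $\det(\F \to \E/\wF)$, while $\det(Ha(G^D)^\vee)$ is the section attached to $\det(\wF \to \E/\F)$; since the determinant is unchanged under transposition, this latter section coincides with $ha(G^D) = \det(Ha(G^D))$ under the canonical identification of the two invertible sheaves involved.

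Next I would apply Proposition \ref{dual} with the dictionary $\A = \E$ of rank $h$, $\B = \wF$ of rank $h-d$, and $\C = \F$ of rank $d$. The two exact sequences recalled above show that $\E/\F \simeq \omega_{G^D}^\vee$ and $\E/\wF \simeq \omega_G^{(p)}$ are locally free, so the hypotheses of the proposition hold. It then yields an isomorphism $\det(\E/\wF) \otimes \det(\F)^{-1} \simeq \det(\E/\F) \otimes \det(\wF)^{-1}$ under which the section $x = \det(\F \to \E/\wF) = ha(G)$ is carried to $y = \det(\wF \to \E/\F) = ha(G^D)$.

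It then remains to recognise the two invertible sheaves as the expected powers of $\Li_G$ and $\Li_{G^D}$. Using $\F = \omega_G$, $\E/\wF \simeq \omega_G^{(p)}$ and the identification $\Li^{(p)} \simeq \Li^{p}$ valid for invertible sheaves in characteristic $p$, the left-hand sheaf becomes $\Li_G^{(p)} \otimes \Li_G^{-1} \simeq \Li_G^{p-1}$; using $\E/\F \simeq \omega_{G^D}^\vee$ and $\wF \simeq (\omega_{G^D}^\vee)^{(p)}$, the right-hand sheaf becomes $\Li_{G^D}^{-1} \otimes \Li_{G^D}^{p} \simeq \Li_{G^D}^{p-1}$. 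Composing these identifications with the isomorphism of Proposition \ref{dual} produces the desired $\Li_G^{p-1} \simeq \Li_{G^D}^{p-1}$, and the equality $x = y$ becomes precisely $ha(G) = ha(G^D)$.

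The argument requires essentially no geometric input beyond the Lemma and Proposition \ref{dual}, and the only genuine care is bookkeeping: one must check that the section attached to $Ha(G^D)^\vee$ matches $ha(G^D)$ — that is, that taking determinants commutes with dualising and with the Frobenius twist — and that the chain of line-bundle identifications is made coherently, so that $x$ and $y$ land in $H^0(S, \Li_G^{p-1})$ and $H^0(S, \Li_{G^D}^{p-1})$ exactly as the statement requires. I expect this to be the main (if minor) obstacle, since the comparison of the two sections themselves is entirely absorbed into Proposition \ref{dual}.
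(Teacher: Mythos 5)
Your proposal is correct and follows essentially the same route as the paper: transport both Hasse maps via the Lemma into the natural maps $\F \to \E/\wF$ and $\wF \to \E/\F$, apply Proposition \ref{dual} to $\A = \E$ with $\{\B,\C\} = \{\F,\wF\}$ (your swap of the roles of $\B$ and $\C$ is immaterial by the symmetry of that proposition), identify the resulting line bundles with $\Li_G^{p-1}$ and $\Li_{G^D}^{p-1}$, and note that $ha(G^D)^\vee$ and $ha(G^D)$ give the same section since determinants commute with dualizing. The bookkeeping points you flag are exactly the ones the paper also records, so there is nothing to add.
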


\begin{proof}
We apply the previous proposition to $\A = \E$, $\B = \F$ and $\C = \wF$. Since $(\det \E / \wF) \otimes~(\det \F)^{-1} \simeq~\Li_G^{p-1}$ and $(\det \E / \F) \otimes (\det \wF)^{-1} \simeq \Li_{G^D}^{p-1}$, the first result follows. \\
From the previous lemma, $ha(G)$ is the section of the invertible sheaf $(\det \E / \wF) \otimes (\det \F)^{-1}$ obtained by taking the determinant of the natural map $\F \to \E / \wF$. Similarly, $ha(G^D)^\vee$ is the section of the invertible sheaf $(\det \E / \F) \otimes (\det \wF)^{-1}$ obtained by taking the determinant of the natural map $\wF \to \E / \F$. Note that $ha(G^D)^\vee$ and $ha(G^D)$ induces the same section under the canonical isomorphism of sheaves
$$(\det \E / \F) \otimes (\det \wF)^{-1} \simeq  (\det \wF^\vee) \otimes (\det (\E / \F)^\vee)^{-1}$$
The second part of the proposition allows us to conclude.
\end{proof}

\section{Unramified partial Hasse invariants}

Let $K_0$ be a finite unramified extension of degree $f \geq 1$ of $\mathbb{Q}_p$, and let $O_{K_0}$ be its ring of integers, and $k$ the residue field. We suppose that $S$ is a $k$-scheme, and that the $p$-divisible group $G$ has an action of $O_{K_0}$. Let $\T$ be the set of embeddings $K_0 \to \overline{\mathbb{Q}_p}$ ; then $\T$ is a cyclic group of order $f$ generated by the Frobenius $\sigma$. The sheaf $\E$ is then free over $\Oo_S \otimes_{\mathbb{Z}_p} O_{K_0}$ (\cite{FGL} section I.B.1), and decomposes into $\E = \oplus_{i=1}^f \E_i$, with $O_{K_0}$ acting on $\E_i$ by $\sigma^i$ for all $1 \leq i \leq f$. Each $\E_i$ is locally free of rank $h_0$, with $f h_0 = h$. The Frobenius and Verschiebung induce maps
$$F_i : \E_{i-1}^{(p)} \to \E_i  \qquad  \qquad    V_i : \E_i \to \E_{i-1}^{(p)} $$
where $\E_0$ is identified with $\E_f$ (we will always identify the indexes $0$ and $f$). For each integer $i$ between $1$ and $f$, define $\F_i := \F \cap \E_i$ and $\wF_i := \wF \cap \E_i$. We make the following assumption.

\begin{hypo}
There exists an integer $0 < d_0 < h_0$ such that $\F_i$ is locally free of rank $d_0$ for all $1 \leq i \leq f$.
\end{hypo} 

\noindent This hypothesis means that there is no obstruction for $G$ to be ordinary. If it is not satisfied, then the Hasse invariant of $G$ is always $0$. \\
For all $1 \leq i \leq f$, we have $\F_{i-1}^{(p)} = $ Im $V_i = $ Ker $F_i$, and $\wF_i = $~Ker~$V_i = $~Im~$F_i$. Moreover, we have isomorphisms

\begin{displaymath}
F_i : (\E_{i-1} / \F_{i-1})^{(p)} \simeq \wF_i      \qquad  \qquad   V_i : \E_i / \widetilde{\F}_i \simeq \F_{i-1}^{(p)}
\end{displaymath}
The sheaves $\wF_i$ are thus locally free of rank $h - d_0$. We have a decomposition $\omega_G = \oplus_{i=1}^f \omega_{G,i}$ with $O_{K_0}$ acting on $\omega_{G,i}$ by $\sigma^i$ for $1 \leq i \leq f$. The filtrations induced by $\F_i$ and $\wF_i$ give respectively the exact sequences

\begin{align*}
0  \to \omega_{G,i} & \to  \E_i   \to \omega_{G^D,i}^\vee  \to 0 \\
0  \to (\omega_{G^D,i-1}^\vee)^{(p)} & \to  \E_i  \to \omega_{G,i-1}^{(p)}  \to 0
\end{align*}

\noindent Define the invertible sheaf $\Li_{G,i} := \det \omega_{G,i}$ for all integers $i$. Let us now fix an integer $1 \leq i \leq f$.

\begin{defi}
The partial Hasse map $Ha_i(G)$ is the map $V_i : \omega_{G,i} \to \omega_{G,i-1}^{(p)}$. It induces a section called the partial Hasse invariant $ha_i(G) \in H^0 ( S, \Li_{G,i-1}^p \Li_{G,i}^{-1})$.
\end{defi}

The map $Ha_i(G^D)^\vee$ is the map $F_i : (\E_{i-1} / \F_{i-1})^{(p)} \to \E_i / \F_i$. The proof of the following lemma is analogous to the one from the previous section.

\begin{lemm}
The Hasse map $Ha_i(G)$ is the natural map $\F_i \to \E_i / \wF_i$, obtained by composing the inclusion of $\F_i$ in $\E_i$ with the projection to $\E_i / \wF_i$. Similarly, the map $Ha_i(G^D)^\vee$ is the natural map $\wF_i \to \E_i / \F_i$.
\end{lemm}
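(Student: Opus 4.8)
The plan is to transcribe the proof of the corresponding lemma in Section~1 index by index, the key structural fact being that $F$ and $V$ respect the decomposition $\E = \oplus_i \E_i$, so that each statement about $\F, \wF, \E$ there has a direct analogue for $\F_i, \wF_i, \E_i$ here. Concretely, I would use the two isomorphisms recorded just above the definition of the partial Hasse map, namely $V_i : \E_i / \wF_i \simeq \F_{i-1}^{(p)}$ and $F_i : (\E_{i-1}/\F_{i-1})^{(p)} \simeq \wF_i$, together with the identifications $\omega_{G,i} = \F_i$ and $\omega_{G,i-1}^{(p)} = \F_{i-1}^{(p)}$ coming from the exact sequences displayed above.

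For the first assertion, I would start from the definition $Ha_i(G) = V_i : \omega_{G,i} \to \omega_{G,i-1}^{(p)}$, which under these identifications is the restriction $V_i|_{\F_i} : \F_i \to \F_{i-1}^{(p)}$. Since $V_i : \E_i \to \E_{i-1}^{(p)}$ has kernel $\wF_i$ and image $\F_{i-1}^{(p)}$, it factors through the isomorphism $V_i : \E_i / \wF_i \simeq \F_{i-1}^{(p)}$; composing $Ha_i(G)$ with the inverse of this isomorphism therefore recovers exactly the map $\F_i \hookrightarrow \E_i \to \E_i/\wF_i$, as claimed.

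For the second assertion, I would take as input that $Ha_i(G^D)^\vee$ is the map $F_i : (\E_{i-1}/\F_{i-1})^{(p)} \to \E_i / \F_i$, i.e.\ $F_i$ followed by the projection $\E_i \to \E_i/\F_i$. Because $F_i : \E_{i-1}^{(p)} \to \E_i$ has kernel $\F_{i-1}^{(p)}$ and image $\wF_i$, it factors through the isomorphism $F_i : (\E_{i-1}/\F_{i-1})^{(p)} \simeq \wF_i$; precomposing with the inverse of this isomorphism converts $Ha_i(G^D)^\vee$ into the natural map $\wF_i \hookrightarrow \E_i \to \E_i/\F_i$.

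I do not expect any genuine obstacle here: everything reduces to reading off how the isomorphisms induced by $F_i$ and $V_i$ are constructed, and the relevant kernels and images are precisely the ones already listed in the setup ($\F_{i-1}^{(p)} = \mathrm{Im}\, V_i = \mathrm{Ker}\, F_i$ and $\wF_i = \mathrm{Ker}\, V_i = \mathrm{Im}\, F_i$). The only mild care needed is to keep track of the Frobenius twists and the shift $i \mapsto i-1$ correctly, but this is bookkeeping rather than a substantive difficulty; this is exactly why the paper can claim the argument is analogous to that of the previous section.
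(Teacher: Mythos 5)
Your proof is correct and is essentially the paper's own argument: the paper simply declares the proof ``analogous to the one from the previous section,'' and that Section~1 proof is exactly what you carry out index by index, factoring $V_i|_{\F_i}$ through the isomorphism $V_i : \E_i/\wF_i \simeq \F_{i-1}^{(p)}$ and $F_i$ through $F_i : (\E_{i-1}/\F_{i-1})^{(p)} \simeq \wF_i$ using $\ker V_i = \wF_i$ and $\ker F_i = \F_{i-1}^{(p)}$. Your bookkeeping of the Frobenius twists and the index shift $i \mapsto i-1$ is also accurate, so there is nothing to correct.
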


\begin{theo}
There is an isomorphism $\Li_{G,i-1}^{p} \Li_{G,i}^{-1} \simeq \Li_{G^D,i-1}^{p} \Li_{G^D,i}^{-1}$. Using this isomorphism, we have $ha_i(G) = ha_i(G^D)$.
\end{theo}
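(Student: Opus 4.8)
The plan is to follow the strategy of the theorem in Section 1 verbatim, now working inside the single graded piece $\E_i$ rather than all of $\E$. The crucial observation from the preceding lemma is that both Hasse maps live inside $\E_i$: the natural map $\F_i \to \E_i / \wF_i$ computes $ha_i(G)$, while the natural map $\wF_i \to \E_i / \F_i$ computes $Ha_i(G^D)^\vee$. Since $\F_i$ has rank $d_0$, $\wF_i$ has rank $h_0 - d_0$, and $\E_i$ has rank $h_0$, with $0 < d_0 < h_0$ guaranteed by the Hypothesis, I can apply Proposition \ref{dual} with $\A = \E_i$, $\B = \F_i$ and $\C = \wF_i$.

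First I would identify the four determinant line bundles using the two exact sequences. The Hodge sequence gives $\F_i = \omega_{G,i}$ and $\E_i / \F_i = \omega_{G^D,i}^\vee$, so $\det \F_i = \Li_{G,i}$ and $\det(\E_i/\F_i) = \Li_{G^D,i}^{-1}$. The conjugate sequence gives $\wF_i = (\omega_{G^D,i-1}^\vee)^{(p)}$ and $\E_i/\wF_i = \omega_{G,i-1}^{(p)}$; here I use that for an invertible sheaf on an $\mF_p$-scheme the Frobenius twist satisfies $\Li^{(p)} \simeq \Li^p$, so that $\det \wF_i \simeq \Li_{G^D,i-1}^{-p}$ and $\det(\E_i/\wF_i) \simeq \Li_{G,i-1}^p$. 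Substituting these into the isomorphism supplied by Proposition \ref{dual},
$$\det(\E_i/\wF_i) \otimes (\det \F_i)^{-1} \simeq \det(\E_i/\F_i) \otimes (\det \wF_i)^{-1},$$
yields exactly $\Li_{G,i-1}^p \Li_{G,i}^{-1} \simeq \Li_{G^D,i-1}^p \Li_{G^D,i}^{-1}$, which is the first assertion.

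For the equality of sections, the lemma identifies $ha_i(G)$ with the determinant of $\F_i \to \E_i/\wF_i$, that is, with the section $y$ of Proposition \ref{dual}, and identifies $Ha_i(G^D)^\vee$ with the determinant of $\wF_i \to \E_i/\F_i$, that is, with the section $x$. As in Section 1, $ha_i(G^D)^\vee$ and $ha_i(G^D)$ define the same section under the canonical isomorphism identifying the determinant of a map with the determinant of its transpose, so $x$ represents $ha_i(G^D)$. The second part of Proposition \ref{dual}, which asserts $x = y$, then gives $ha_i(G) = ha_i(G^D)$.

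I expect no genuine new difficulty beyond the Section 1 case; the step requiring the most care will be the bookkeeping of indices and Frobenius twists. In particular one must check that the twisted pieces $\omega_{G,i-1}^{(p)}$ and $(\omega_{G^D,i-1}^\vee)^{(p)}$ produce the shift from index $i$ to $i-1$ together with the exponent $p$, so that the line bundle furnished by Proposition \ref{dual} is precisely the one in which $ha_i(G)$ and $ha_i(G^D)$ are declared to live.
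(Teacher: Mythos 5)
Your proof is correct and takes essentially the same approach as the paper: the paper's proof likewise applies Proposition \ref{dual} to $\A = \E_i$, $\B = \F_i$, $\C = \wF_i$ and records exactly your four identifications $\det \F_i = \Li_{G,i}$, $\det(\E_i/\F_i) = \Li_{G^D,i}^{-1}$, $\det \wF_i = \Li_{G^D,i-1}^{-p}$, $\det(\E_i/\wF_i) = \Li_{G,i-1}^{p}$. You merely make explicit two steps the paper leaves implicit by analogy with Section 1, namely the transpose identification of $ha_i(G^D)^\vee$ with $ha_i(G^D)$ and the Frobenius-twist bookkeeping $\Li^{(p)} \simeq \Li^{p}$.
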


\begin{proof}
We apply the proposition \ref{dual} to $\A = \E_i$, $\B = \F_i$ and $\C = \wF_i$. Note that we have
$$\det( \F_i) = \Li_{G,i} \qquad \det( \E_i / \F_i) = \Li_{G^D,i}^{-1} \qquad \det( \wF_i) = \Li_{G^D,i-1}^{-p} \qquad \det( \E_i / \wF_i) = \Li_{G,i-1}^p$$
\end{proof}

\section{Hasse invariants for $p$-divisible groups with the Pappas-Rapoport condition}

\subsection{The primitive Hasse invariants $m_i^{[j]}$}

Let $K$ be a totally ramified extension of $K_0$ of degree $e \geq 2$, $O_K$ its ring of integers, and $\pi$ an uniformizer. Suppose that the $p$-divisible group $G$ has an action of $O_K$. Since $\E$ is locally free over $\Oo_S \otimes_{\mF_p} O_K/p$ (\cite{FGL} section I.B.1), each $\E_i$ is locally free over $\Oo_S \otimes_{k,\sigma^i} O_K/p$ of rank $h_1$. We have an isomorphism $O_K/p \simeq k[X]/ X^e$, with $\pi$ mapping to $X$ ; we then see that $e h_1 = h_0$. \\
We will denote by $\E_i [\pi^j]$ the kernel of the multiplication by $\pi^j$ on $\E_i$, for all $1 \leq j \leq e-1$ and $1 \leq i \leq f$. The multiplication by $\pi^j$ induces an isomorphism
$$\E_i / \E_i[\pi^j] \simeq \E_i[\pi^{e-j}]$$
The inverse of this map will be called the division by $\pi^j$ ; it is defined on $\E_i[\pi^{e-j}]$. Let $0 < d_1 < h_1$ be an integer, and we make the following assumption.

\begin{hypo}
There exists a filtration $0 = \omega_{G,i}^{[0]} \subset \omega_{G,i}^{[1]} \subset \dots \subset \omega_{G,i}^{[e-1]} \subset \omega_{G,i}^{[e]} = \omega_{G,i}$ for each $1 \leq i \leq f$, such that
\begin{itemize}
\item each $\omega_{G,i}^{[j]}$ is locally a direct summand of $\omega_{G,i}$
\item $\omega_{G,i}^{[j]} / \omega_{G,i}^{[j-1]}$ is locally free of rank $d_1$ for all $1 \leq j \leq e$.
\item $\pi \cdot \omega_{G,i}^{[j]} \subset \omega_{G,i}^{[j-1]}$ for all $1 \leq j \leq e$.
\end{itemize}
\end{hypo}

\begin{rema}
This condition is the Pappas-Rapoport condition (see \cite{P-R}). In general, the rank of the graded parts $\omega_{G,i}^{[j]} / \omega_{G,i}^{[j-1]}$ are fixed but may depend on $i$ and $j$. But if this rank is not constant, then the Hasse invariant is $0$. 
\end{rema}

\noindent Let us fix an integer $1 \leq i \leq f$ until the rest of the paper. Since the sheaf $\F_i$ is isomorphic to $\omega_{G,i}$, we have a filtration $(\F_i^{[j]})_{0 \leq j \leq e}$ on $\F_i$.

\begin{prop}
The filtration on $\omega_{G,i}$ induces canonically a filtration on $\omega_{G^D,i}$, satisfying the same properties as in the hypothesis, with $d_1$ replaced by $h - d_1$.
\end{prop}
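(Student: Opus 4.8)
The plan is to realize everything inside $\E_i^\vee = H_{dR}^1(G^D)_i$ and to transport the Pappas--Rapoport filtration across the duality pairing. First I would record the basic identifications. From the exact sequence $0 \to \omega_{G,i} \to \E_i \to \omega_{G^D,i}^\vee \to 0$ we get $\omega_{G^D,i}^\vee \simeq \E_i/\F_i$, hence $\omega_{G^D,i} \simeq (\E_i/\F_i)^\vee = \F_i^{\perp}$, the annihilator of $\F_i$ for the canonical pairing $\E_i \times \E_i^\vee \to \Oo_S$. Since $\E_i$ is locally free of rank $h_1$ over $\Oo_S \otimes_{k,\sigma^i} O_K/p \simeq \Oo_S[X]/X^e$, so is $\E_i^\vee$, and multiplication by $\pi$ on $\E_i^\vee$ is the transpose of multiplication by $\pi$ on $\E_i$, i.e. $\langle \pi x, \phi \rangle = \langle x, \pi \phi \rangle$. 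As $\F_i$ is stable under $\pi$, so is $\F_i^{\perp} = \omega_{G^D,i}$, and a rank count gives $\mathrm{rk}\, \omega_{G^D,i} = e(h_1 - d_1)$; thus the target graded rank is $h_1 - d_1$, as in the statement.

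Next I would build the filtration. Dualizing is harmless here: a filtration $Q^{[\bullet]}$ on $\E_i/\F_i = \omega_{G^D,i}^\vee$ with $\pi Q^{[j]} \subseteq Q^{[j-1]}$ and locally free graded quotients of rank $h_1 - d_1$ yields, by taking annihilators $\omega_{G^D,i}^{[j]} := (Q^{[e-j]})^{\perp}$, a filtration of $\omega_{G^D,i}$ with the same graded ranks and with $\pi\, \omega_{G^D,i}^{[j]} \subseteq \omega_{G^D,i}^{[j-1]}$ (the last inclusion is a one-line consequence of $\phi(\pi Q^{[e-j+1]}) \subseteq \phi(Q^{[e-j]}) = 0$). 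So it suffices to produce $Q^{[\bullet]}$ canonically from the $\F_i^{[j]}$. The natural candidate comes from the $\pi$-adic interaction of the flag $\F_i^{[\bullet]}$ with $\E_i$, combining the $\pi^{j}$-torsion $\E_i[\pi^{j}]$ with the preimages $\{x : \pi x \in \F_i^{[\bullet]}\}$ and projecting to $\E_i/\F_i$. Whatever the precise recipe, the two formal properties — that it is an increasing filtration from $0$ to $\E_i/\F_i$, and that $\pi$ lowers the index by one — will follow directly from the defining inclusions $\pi \F_i^{[j]} \subseteq \F_i^{[j-1]}$.

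The crux, and the step I expect to be the main obstacle, is showing that the graded pieces are locally free of rank exactly $h_1 - d_1$ and are locally direct summands. This cannot be arranged from the coarse $\pi$-power filtration alone: if, say, $\F_i = \E_i[\pi]$ (a legitimate Pappas--Rapoport filtration, with $\pi \F_i = 0$), then the filtration of $\E_i/\F_i$ by the images of $\E_i[\pi^{j}]$ does not have constant graded rank — for $e = 2$ the ranks are $0$ and $2(h_1 - d_1)$ — because the induced maps $\pi : \F_i^{[j]}/\F_i^{[j-1]} \to \F_i^{[j-1]}/\F_i^{[j-2]}$ need not be injective. Hence the construction must use the fine filtration $\F_i^{[\bullet]}$ essentially, and the rank computation must track precisely how $\F_i^{[\bullet]}$ meets the $\pi^{j}$-torsion. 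I would handle this by working \'etale-locally and putting the flag in a normal form adapted to the $\Oo_S[X]/X^e$-structure, in the spirit of the Pappas--Rapoport splitting models: locally one chooses a basis in which $\F_i$ and its filtration are spanned by monomials $X^{a}\!\cdot(\text{basis vectors})$, after which the complementary flag on $\E_i/\F_i$, its graded ranks, and the local splittings are all transparent. Local freeness — as opposed to a mere fibrewise dimension count — is guaranteed by the hypothesis that the $\F_i^{[j]}$ are local direct summands.

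Finally I would check canonicity: the only data entering the construction are the $O_K$-action, the Hodge filtration $\F_i \simeq \omega_{G,i}$, the Pappas--Rapoport filtration $\F_i^{[\bullet]}$, and the canonical duality $H_{dR}^1(G^D) = \E^\vee$ with its pairing — none of which depends on the auxiliary local normal form, so the filtration on $\omega_{G^D,i}$ is well defined globally. Assembling the three verified properties (local direct summand, graded rank $h_1 - d_1$, and $\pi\, \omega_{G^D,i}^{[j]} \subseteq \omega_{G^D,i}^{[j-1]}$) gives the Pappas--Rapoport condition for $G^D$ with $d_1$ replaced by $h_1 - d_1$.
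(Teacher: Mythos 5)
Your overall strategy --- produce a $\pi$-compatible flag $Q^{[\bullet]}$ on $\E_i/\F_i \simeq \omega_{G^D,i}^\vee$ with locally free graded pieces of rank $h_1-d_1$, then pass to annihilators --- is indeed the paper's strategy, and your observation that the coarse torsion filtration by the images of $\E_i[\pi^j]$ cannot work is correct. But the proposal has a genuine gap exactly where the content of the proposition lies: you never define $Q^{[\bullet]}$. The sentence ``whatever the precise recipe'' skips the one formula that \emph{is} the paper's proof: for $1 \leq j \leq e$ one sets $\F_i^{[e+j]} := (\pi^j)^{-1}\,\F_i^{[e-j]}$, the preimage of $\F_i^{[e-j]}$ under multiplication by $\pi^j$, which extends the flag to $\F_i^{[e]} \subset \F_i^{[e+1]} \subset \dots \subset \F_i^{[2e]} = \E_i$ (so $Q^{[j]} = \F_i^{[e+j]}/\F_i$, and $\omega_{G^D,i}^{[j]} \simeq (\E_i/\F_i^{[2e-j]})^\vee$). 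Once this formula is written down, the step you single out as the ``main obstacle'' evaporates: since $\E_i$ is locally free over $\Oo_S \otimes_{k,\sigma^i} O_K/p$, multiplication by $\pi^j$ is a surjection $\E_i \to \E_i[\pi^{e-j}]$ whose kernel $\E_i[\pi^j]$ is a local direct summand of rank $jh_1$; hence the preimage of the local direct summand $\F_i^{[e-j]} \subset \E_i[\pi^{e-j}]$ is a local direct summand of $\E_i$, locally free of rank $jh_1+(e-j)d_1$, and the graded pieces have rank $h_1-d_1$. The inclusions and the relation $\pi\cdot\F_i^{[e+j]} \subset \F_i^{[e+j-1]}$ follow in one line from $\pi\cdot\F_i^{[e-j]} \subset \F_i^{[e-j-1]}$. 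No normal form, and no smoothness or semi-perfectness of $S$, is needed.

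Worse, the device you propose to close the gap is false: a Pappas--Rapoport flag cannot in general be put, even \'etale-locally, in ``monomial'' form over $\Oo_S \otimes O_K/p$. Take $e=2$, $h_1=2$, $d_1=1$, $S = \mathrm{Spec}\,k[t]$, and $\E_i$ free with $O_K/p$-basis $u,v$ (write $X$ for the action of $\pi$, so $X^2=0$); set $\F_i^{[1]} = \langle Xu\rangle$ and $\F_i^{[2]} = \langle Xu,\ tu+Xv\rangle$. All hypotheses hold: both sheaves are local direct summands (the pair $(t,1)$ is unimodular), the graded pieces are free of rank $1$, and $X\cdot\F_i^{[2]} = \langle tXu\rangle \subset \F_i^{[1]}$. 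Suppose a basis $u',v'$ put this flag in monomial form near $t=0$. Since $\F_i^{[1]}$ is rank one and killed by $X$, necessarily $\F_i^{[1]} = \langle Xu'\rangle$ after relabelling; then the only monomial options for $\F_i^{[2]}$ compatible with $X\F_i^{[2]}\subset\F_i^{[1]}$ and with $u',v'$ being a basis are $\F_i^{[2]} = \langle u', Xu'\rangle$, which forces $X\F_i^{[2]} = \F_i^{[1]}$ (false, as $X\F_i^{[2]} = t\,\F_i^{[1]} \subsetneq \F_i^{[1]}$ near $t=0$), or $\F_i^{[2]} = \langle Xu', Xv'\rangle = \E_i[X]$ (false, as $X(tu+Xv) = tXu \neq 0$). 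So no monomial basis exists on any neighborhood of $t=0$, and such flags do occur for genuine $p$-divisible groups by the local model diagram for splitting models. In short: your duality/annihilator reduction is fine and matches the paper, but the actual construction is missing, and the normal-form lemma you would use to verify the ranks is not true.
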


\begin{proof}
For $1 \leq j \leq e$, we define
$$\F_i^{[e+j]} := (\pi^j)^{-1} \F_i^{[e-j]} $$
Since $\F_i^{[e-j]}$ is killed by $\pi^{e-j}$, it lies in $\E_i[\pi^{e-j}]$, and thus $\F_i^{[e+j]}$ is locally free of rank $j h_1 +~(e-~j) d_1$. The sheaf $\F_i^{[e+j]}$ is a direct summand of $\E_i$, and since $\pi \cdot \F_i^{[e-j]} \subset \F_i^{[e-(j+1)]}$, one has $\F_i^{[e+j]} \subset~\F_i^{[e+j+1]}$. One has then a filtration
$$\F_i^{[e]} \subset \F_i^{[e+1]} \subset \dots \subset \F_i^{[2e-1]} \subset \F_i^{[2e]} = \E_i$$
and $\F_i^{[e+j]} / \F_i^{[e+j-1]}$ is locally free of rank $h_1 - d_1$ for $1 \leq j \leq e$. Moreover, we deduce from the inclusion $\F_i^{[e-j]} \subset \F_i^{[e-(j-1)]}$ that $\pi \cdot \F_i^{[e+j]} \subset \F_i^{[e+j-1]}$ for all $1 \leq j \leq e$. \\
Since $\E_i / \F_i^{[e]}$ is isomorphic to $\omega_{G^D,i}^\vee$, we have thus constructed a filtration on the sheaf $\omega_{G^D,i}$ satisfying the required properties.
\end{proof}

\noindent We then have a filtration
$$0 = \omega_{G^D,i}^{[0]} \subset \omega_{G^D,i}^{[1]} \subset \dots \subset \omega_{G^D,i}^{[e-1]} \subset \omega_{G^D,i}^{[e]} = \omega_{G^D,i}$$
with the property that $\omega_{G^D,i}^{[j]} \simeq (\F_i / \F_i^{[2e-j]})^\vee$ and $\omega_{G^D,i}^{[j]} / \omega_{G^D,i}^{[j-1]} \simeq (\F_i^{[2e-j+1]} / \F_i^{[2e-j]})^\vee$ for all $j$ between $1$ and $e$. \\
Define $\Li_{G,i}^{[j]} := \det (\F_i^{[j]} / \F_i^{[j-1]})$, and $\Li_{G^D,i}^{[j]} := \det (\F_i^{[2e-j+1]} / \F_i^{[2e-j]})^\vee$ for $1 \leq j \leq e$.

\begin{defi}
Let $2 \leq j \leq e$ be an integer. We define the primitive Hasse maps $M_{i}^{[j]}(G)$ as the map $\omega_{G,i}^{[j]} / \omega_{G,i}^{[j-1]} \to \omega_{G,i}^{[j-1]} / \omega_{G,i}^{[j-2]}$ induced by the multiplication by $\pi$. The determinants of these maps give the primitive Hasse invariants $m_{i}^{[j]} (G) \in H^0 \left(S,\Li_{G,i}^{[j-1]} (\Li_{G,i}^{[j]})^{-1}\right)$.
\end{defi}

\noindent These primitive Hasse invariants have been defined in \cite{R-X} for the Hilbert modular variety (see also \cite{Bi}). The dual of the map $M_{i}^{[j]}(G^D)$ is thus the map $\F_i^{[2e+2-j]} / \F_i^{[2e+1-j]} \to~\F_i^{[2e+1-j]} / \F_i^{[2e-j]}$ induced by the multiplication by $\pi$ for all $2 \leq j \leq e$. \\
Let us define ${\F_i^{[j]}}' := \pi^{-1} \left(\F_i^{[j]} \right)$ for all $0 \leq j \leq e-1$. It is a locally free sheaf of rank $h_1 + j d_1$.

\begin{lemm}
The multiplication by $\pi$ induces an isomorphism ${\F_i^{[j-1]}}' / {\F_i^{[j-2]}}' \simeq \F_i^{[j-1]} / \F_i^{[j-2]}$ for all $2 \leq j \leq e$. Using this isomorphism, the map $M_{i}^{[j]}(G)$ is the natural map $\F_i^{[j]} / \F_i^{[j-1]} \to~{\F_i^{[j-1]}}' / {\F_i^{[j-2]}}'$. 
\end{lemm}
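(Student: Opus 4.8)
The plan is to analyze directly what the multiplication-by-$\pi$ map $M_i^{[j]}(G)$ does and then reinterpret it through the sheaves ${\F_i^{[j]}}'$ defined just above. Recall that $\F_i^{[j]}$ is killed by $\pi^j$, and that the Pappas-Rapoport condition gives $\pi \cdot \F_i^{[j]} \subset \F_i^{[j-1]}$. The definition ${\F_i^{[j]}}' = \pi^{-1}(\F_i^{[j]})$ means ${\F_i^{[j]}}'$ is the preimage of $\F_i^{[j]}$ under multiplication by $\pi$ in $\E_i$; concretely it is the set of local sections $x$ with $\pi x \in \F_i^{[j]}$. Since $\E_i$ is locally free over $\Oo_S \otimes_{k,\sigma^i} O_K/p \simeq \Oo_S[X]/X^e$ of rank $h_1$, multiplication by $\pi = X$ is a well-understood $\Oo_S$-linear endomorphism, and I expect all the asserted ranks to follow from a local computation in this module structure.

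\textbf{The isomorphism ${\F_i^{[j-1]}}'/{\F_i^{[j-2]}}' \simeq \F_i^{[j-1]}/\F_i^{[j-2]}$.}

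First I would show that multiplication by $\pi$ maps ${\F_i^{[j]}}'$ into $\F_i^{[j]}$ by the very definition of the preimage, and sends ${\F_i^{[j-1]}}'$ to a subsheaf of $\F_i^{[j-1]}$. Applied with $j$ replaced by $j-1$ and $j-2$, this gives a map ${\F_i^{[j-1]}}'/{\F_i^{[j-2]}}' \to \F_i^{[j-1]}/\F_i^{[j-2]}$. To see it is an isomorphism, I would argue locally: writing $\E_i \simeq \Oo_S[X]/X^e$ with $\pi$ acting as $X$, the kernel of multiplication by $\pi$ on $\E_i$ is $X^{e-1}\cdot\E_i$ (the $\pi$-torsion $\E_i[\pi]$), which is already contained in $\F_i^{[1]} \subset \F_i^{[j-2]}$ for $j \geq 2$ (using that $\F_i^{[j-1]}$ for small $j$ still contains the bottom of the $\pi$-torsion). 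Thus multiplication by $\pi$ induces an injection on the relevant graded quotient, and a rank count — both sides have rank $d_1$, since $\dim {\F_i^{[j-1]}}' - \dim {\F_i^{[j-2]}}' = (h_1 + (j-1)d_1) - (h_1 + (j-2)d_1) = d_1$ matches $\dim \F_i^{[j-1]} - \dim \F_i^{[j-2]} = d_1$ — forces it to be an isomorphism. The main care here is checking that the kernel of $\pi$ lands in $\F_i^{[j-2]}$ and not merely in $\F_i^{[j-1]}$, which is where I expect the only genuine obstacle to lie; this should follow from the chain $\F_i^{[0]} \subset \F_i^{[1]} \subset \cdots$ together with the fact that $\E_i[\pi]$ has rank $h_1$ and sits inside $\F_i^{[1]}$ because $\omega_{G,i}^{[1]}$ absorbs the $\pi$-torsion by the Pappas-Rapoport filtration.

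\textbf{Identifying $M_i^{[j]}(G)$ with the natural map.}

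Once the isomorphism is established, identifying $M_i^{[j]}(G)$ with the natural inclusion-induced map $\F_i^{[j]}/\F_i^{[j-1]} \to {\F_i^{[j-1]}}'/{\F_i^{[j-2]}}'$ is a formal unwinding. By definition $M_i^{[j]}(G)$ is multiplication by $\pi$ from $\omega_{G,i}^{[j]}/\omega_{G,i}^{[j-1]} \simeq \F_i^{[j]}/\F_i^{[j-1]}$ to $\omega_{G,i}^{[j-1]}/\omega_{G,i}^{[j-2]} \simeq \F_i^{[j-1]}/\F_i^{[j-2]}$. I would note that $\F_i^{[j]} \subset {\F_i^{[j-1]}}'$: indeed any local section of $\F_i^{[j]}$ satisfies $\pi x \in \F_i^{[j-1]}$ by the Pappas-Rapoport condition, which is exactly the defining property of ${\F_i^{[j-1]}}'$. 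Hence the inclusion $\F_i^{[j]} \hookrightarrow {\F_i^{[j-1]}}'$ descends to a natural map on the quotients $\F_i^{[j]}/\F_i^{[j-1]} \to {\F_i^{[j-1]}}'/{\F_i^{[j-2]}}'$. Composing this natural map with the isomorphism of the previous step recovers precisely multiplication by $\pi$, because the isomorphism ${\F_i^{[j-1]}}'/{\F_i^{[j-2]}}' \simeq \F_i^{[j-1]}/\F_i^{[j-2]}$ is itself multiplication by $\pi$. Thus the composite equals $M_i^{[j]}(G)$, which is the claimed statement, and the proof concludes by transport of the determinant section through these identifications.
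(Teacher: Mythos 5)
Your second half---identifying $M_i^{[j]}(G)$ with the natural map via the inclusion $\F_i^{[j]} \subset {\F_i^{[j-1]}}'$ (which holds because $\pi \cdot \F_i^{[j]} \subset \F_i^{[j-1]}$) and composing with the isomorphism---is correct and is exactly the paper's argument. But your proof of the isomorphism ${\F_i^{[j-1]}}'/{\F_i^{[j-2]}}' \simeq \F_i^{[j-1]}/\F_i^{[j-2]}$ has two genuine problems. For injectivity you invoke the containment $\E_i[\pi] \subset \F_i^{[1]} \subset \F_i^{[j-2]}$, which is false on both counts: $\F_i^{[1]}$ is locally free of rank $d_1$ and killed by $\pi$, so the true containment is $\F_i^{[1]} \subset \E_i[\pi]$, and $\E_i[\pi]$ has rank $h_1 > d_1$---your inclusion goes the wrong way; moreover for $j=2$ the target is $\F_i^{[0]} = 0$, so the claim cannot hold even in spirit. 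Fortunately the worry you are trying to address is vacuous: since ${\F_i^{[j-2]}}' = \pi^{-1}\left(\F_i^{[j-2]}\right)$ is by definition the \emph{full} preimage, any local section $x$ of ${\F_i^{[j-1]}}'$ with $\pi x \in \F_i^{[j-2]}$ lies in ${\F_i^{[j-2]}}'$ (in particular $\E_i[\pi] \subset {\F_i^{[j-2]}}'$ automatically, as $\pi x = 0 \in \F_i^{[j-2]}$), so the kernel of the induced map is zero with no argument about $\pi$-torsion needed at all.

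The more serious gap is surjectivity, which you never establish: you deduce ``isomorphism'' from ``injective map of locally free sheaves of equal rank $d_1$,'' and that implication fails over a general base $S$ (multiplication by a nonzerodivisor non-unit on $\Oo_S$ is an injective endomorphism of a rank-one sheaf that is not an isomorphism; nothing in your argument rules out such degeneration along fibers). The paper supplies the missing input directly: $\F_i^{[j-1]}$ is killed by $\pi^{j-1}$ with $j-1 \leq e-1$, hence lies in $\E_i[\pi^{e-1}] = \pi \cdot \E_i$ (because $\E_i$ is locally free over $\Oo_S \otimes_{k,\sigma^i} O_K/p \simeq \Oo_S[X]/X^e$, so the kernel of $X^{e-1}$ is the image of $X$); therefore every local section of $\F_i^{[j-1]}$ is $\pi y$ for some $y \in \E_i$, and such a $y$ automatically lies in ${\F_i^{[j-1]}}'$ by definition of the preimage. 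Thus multiplication by $\pi$ gives a surjection ${\F_i^{[j-1]}}' \to \F_i^{[j-1]}/\F_i^{[j-2]}$ with kernel exactly ${\F_i^{[j-2]}}'$, which is the paper's short proof. Your rank computation is fine as a consistency check but cannot substitute for this surjectivity step.
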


\begin{proof}
Let $j$ be an integer between $2$ and $e$. Since $\F_i^{[j-1]}$ lies in $\E_i[\pi^{e-1}]$, the multiplication by $\pi$ gives a surjective map
$${\F_i^{[j-1]}}' \to {\F_i^{[j-1]}} / {\F_i^{[j-2]}}$$
The kernel is exactly ${\F_i^{[j-2]}}'$, so the multiplication by $\pi$ induces an isomorphism 
$${\F_i^{[j-1]}}' / {\F_i^{[j-2]}}' \simeq \F_i^{[j-1]} / \F_i^{[j-2]}$$
The composition of $M_{i}^{[j]}(G)$ with the inverse of this isomorphism gives the natural map
$$\F_i^{[j]} / \F_i^{[j-1]} \to {\F_i^{[j-1]}}' / {\F_i^{[j-2]}}'$$
\end{proof}

\begin{lemm}
The multiplication by $\pi^{e-j}$ induces an isomorphism $\F_i^{[2e+1-j]} / \F_i^{[2e-j]} \simeq {\F_i^{[j-1]}}' / \F_i^{[j]}$ for all $1 \leq j \leq e$. Using these isomorphisms, the map ${M_{i}^{[j]}(G^D)}^\vee$ is the natural map
$${\F_i^{[j-2]}}' / \F_i^{[j-1]} \to {\F_i^{[j-1]}}' / \F_i^{[j]}$$
for $2 \leq j \leq e$.
\end{lemm}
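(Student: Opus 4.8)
The plan is to reduce both assertions to explicit manipulations of the preimage sheaves $\F_i^{[e+k]} = (\pi^k)^{-1} \F_i^{[e-k]}$ introduced in the proof of the Pappas--Rapoport proposition. First I would rewrite the sheaves occurring in the statement in this form: taking $k = e+1-j$ and $k = e-j$ gives $\F_i^{[2e+1-j]} = (\pi^{e+1-j})^{-1} \F_i^{[j-1]}$ and $\F_i^{[2e-j]} = (\pi^{e-j})^{-1} \F_i^{[j]}$, where $(\pi^m)^{-1}$ denotes the full preimage in $\E_i$ under multiplication by $\pi^m$. This puts everything on the same footing as the sheaves ${\F_i^{[j]}}' = \pi^{-1}(\F_i^{[j]})$.

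For the first assertion, I would check that multiplication by $\pi^{e-j}$ carries $\F_i^{[2e+1-j]}$ into ${\F_i^{[j-1]}}'$: if $\pi^{e+1-j} x \in \F_i^{[j-1]}$ then $\pi (\pi^{e-j} x) \in \F_i^{[j-1]}$, so $\pi^{e-j} x \in {\F_i^{[j-1]}}'$. Composing with the projection ${\F_i^{[j-1]}}' \to {\F_i^{[j-1]}}' / \F_i^{[j]}$, the kernel of the resulting map consists of those $x$ with $\pi^{e-j} x \in \F_i^{[j]}$, i.e. exactly $\F_i^{[2e-j]}$; this yields an injection $\F_i^{[2e+1-j]} / \F_i^{[2e-j]} \hookrightarrow {\F_i^{[j-1]}}' / \F_i^{[j]}$. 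For surjectivity I would argue directly: given a local section $y$ of ${\F_i^{[j-1]}}'$, the relation $\pi y \in \F_i^{[j-1]} \subset \E_i[\pi^{j-1}]$ forces $y \in \E_i[\pi^j]$, which is precisely the image of multiplication by $\pi^{e-j}$ on $\E_i$; choosing $x$ with $\pi^{e-j} x = y$, the identity $\pi^{e+1-j} x = \pi y \in \F_i^{[j-1]}$ shows $x \in \F_i^{[2e+1-j]}$, so $y$ lifts. This gives the isomorphism for all $1 \leq j \leq e$.

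For the second assertion I would run a diagram chase, applying the first part twice: to the codomain with the index $j$ (isomorphism via $\pi^{e-j}$) and to the domain with the index $j-1$ (isomorphism via $\pi^{e+1-j}$, identifying $\F_i^{[2e+2-j]} / \F_i^{[2e+1-j]}$ with ${\F_i^{[j-2]}}' / \F_i^{[j-1]}$). Tracing a local section $x$ of $\F_i^{[2e+2-j]}$: the domain isomorphism sends its class to the class of $u := \pi^{e+1-j} x$, while the map ${M_i^{[j]}(G^D)}^\vee$ followed by the codomain isomorphism sends it to the class of $\pi^{e-j}(\pi x) = \pi^{e+1-j} x = u$. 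Hence in the new coordinates the map sends the class of $u$ to the class of the same $u$; since ${\F_i^{[j-2]}}' \subset {\F_i^{[j-1]}}'$ and $\F_i^{[j-1]} \subset \F_i^{[j]}$, this is exactly the natural map ${\F_i^{[j-2]}}' / \F_i^{[j-1]} \to {\F_i^{[j-1]}}' / \F_i^{[j]}$, as claimed.

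I expect the only genuinely delicate point to be the surjectivity in the first part, where one must use the structure of $\E_i$ as a free module over $O_K / p \simeq k[X] / X^e$ to identify the image of $\pi^{e-j}$ with $\E_i[\pi^j]$; everything else is bookkeeping with the shifted indices $2e+1-j$, etc., and with the two preimage conventions $(\pi^m)^{-1}$ and ${(\cdot)}'$.
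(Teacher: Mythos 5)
Your proposal is correct and follows essentially the same route as the paper: the same identities $\F_i^{[2e+1-j]} = (\pi^{e+1-j})^{-1}\F_i^{[j-1]}$ and $\F_i^{[2e-j]} = (\pi^{e-j})^{-1}\F_i^{[j]}$, the same kernel computation, and the same diagram chase identifying ${M_i^{[j]}(G^D)}^\vee$ with the natural map after transporting by $\alpha_i^{[j-1]}$ and $\alpha_i^{[j]}$. Your direct lifting argument for surjectivity (via $\E_i[\pi^j] = \pi^{e-j}\E_i$, using local freeness over $\Oo_S \otimes O_K/p$) is just an unpacked version of the paper's observation that ${\F_i^{[j-1]}}'$ lies in $\E_i[\pi^j]$ and that $\F_i^{[2e+1-j]} = (\pi^{e-j})^{-1}{\F_i^{[j-1]}}'$.
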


\begin{proof}
Let $j$ be an integer between $1$ and $e$ ; we have $\F_i^{[2e+1-j]} = (\pi^{e+1-j})^{-1} \F_i^{[j-1]} = (\pi^{e-j})^{-1} {\F_i^{[j-1]}}'$. Since ${\F_i^{[j-1]}}'$ is killed by $\pi^j$, it lies in $\E_i [\pi^j]$. The multiplication by $\pi^{e-j}$ induces a surjective map
$$\F_i^{[2e+1-j]} \to {\F_i^{[j-1]}}' / \F_i^{[j]}$$
The kernel of this map is $(\pi^{e-j})^{-1} \F_i^{[j]} = \F_i^{[2e-j]}$. The multiplication by $\pi^{e-j}$ thus gives an isomorphism
$$\alpha_i^{[j]} : \F_i^{[2e+1-j]} / \F_i^{[2e-j]} \simeq {\F_i^{[j-1]}}' / \F_i^{[j]}$$
Now suppose $2 \leq j \leq e$, and consider the diagram
\begin{displaymath}
\xymatrix@C+40pt{
\F_i^{[2e+2-j]} / \F_i^{[2e+1-j]}   \ar[d]_{\alpha_i^{[j-1]}}   \ar[r]^{{M_{i}^{[j]}(G^D)}^\vee}      &            \F_i^{[2e+1-j]} / \F_i^{[2e-j]} \ar[d]^{\alpha_i^{[j]}} \\
{\F_i^{[j-2]}}' / \F_i^{[j-1]}            & {\F_i^{[j-1]}}' / \F_i^{[j]} 
}
\end{displaymath}
Since both $\alpha_i^{[j-1]}$ and $\alpha_i^{[j]}$ are isomorphisms, ${M_{i}^{[j]}(G^D)}^\vee$ induce a map
$${\F_i^{[j-2]}}' / \F_i^{[j-1]}  \to {\F_i^{[j-1]}}' / \F_i^{[j]} $$
This map is the composition of the division by $\pi^{e-j+1}$, the multiplication by $\pi$, and the multiplication by $\pi^{e-j}$, hence is the natural map induced by the inclusion ${\F_i^{[j-2]}}' \subset {\F_i^{[j-1]}}'$.
\end{proof}

\begin{theo}
Let $2 \leq j \leq e$ be an integer. There is an isomorphism $\Li_{G,i}^{[j-1]} (\Li_{G,i}^{[j]})^{-1} \simeq \Li_{G^D,i}^{[j-1]} (\Li_{G^D,i}^{[j]})^{-1}$. With this isomorphism, one has the equality $m_i^{[j]}(G) =m_i^{[j]}(G^D)$.
\end{theo}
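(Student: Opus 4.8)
The plan is to deduce the theorem from Proposition~\ref{dual}, exactly as in the unramified case, but applied to a cleverly chosen triple $(\A, \B, \C)$. The two preceding lemmas have already done the essential geometric work: they rewrite both $M_i^{[j]}(G)$ and ${M_i^{[j]}(G^D)}^\vee$ as \emph{natural inclusion-induced maps} between subquotients of $\E_i$, living inside the chain $\F_i^{[j-2]} \subset {\F_i^{[j-2]}}' $ and $\F_i^{[j-1]} \subset {\F_i^{[j-1]}}'$. The first lemma presents $M_i^{[j]}(G)$ as the natural map $\F_i^{[j]}/\F_i^{[j-1]} \to {\F_i^{[j-1]}}'/{\F_i^{[j-2]}}'$, and the second presents ${M_i^{[j]}(G^D)}^\vee$ as the natural map ${\F_i^{[j-2]}}'/\F_i^{[j-1]} \to {\F_i^{[j-1]}}'/\F_i^{[j]}$. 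The point is that these two maps are the two determinant sections produced by Proposition~\ref{dual} applied to one and the same configuration.

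\textbf{Choice of the ambient sheaf and subsheaves.}
First I would take $\A := {\F_i^{[j-1]}}' / \F_i^{[j-2]}$, which is locally free because both terms are locally direct summands of $\E_i$. Inside it I would set $\B := \F_i^{[j-1]} / \F_i^{[j-2]}$ and $\C := {\F_i^{[j-2]}}' / \F_i^{[j-2]}$. With these choices one checks directly from the rank computations already in the excerpt ($\dim \F_i^{[j]} = j d_1$, $\dim {\F_i^{[j]}}' = h_1 + j d_1$) that $\A$, $\B$, $\C$ have the right ranks for the proposition, and that $\A/\B$, $\A/\C$, and the quotients $\det$ match the four sheaves $\Li_{G,i}^{[j]}, \Li_{G,i}^{[j-1]}, \Li_{G^D,i}^{[j]}, \Li_{G^D,i}^{[j-1]}$. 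Concretely, $\A/\B \simeq {\F_i^{[j-1]}}'/\F_i^{[j-1]}$, $\A/\C \simeq {\F_i^{[j-1]}}'/{\F_i^{[j-2]}}'$, and $\det \B, \det \C$ give the remaining determinants; the isomorphism of invertible sheaves asserted in the theorem is then exactly the isomorphism furnished by the first part of Proposition~\ref{dual}.

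\textbf{Identifying the two sections.}
Next I would observe that the section $x$ of Proposition~\ref{dual}, i.e.\ the determinant of the natural map $\C \to \A/\B$, is precisely $m_i^{[j]}(G)$ after applying the isomorphism ${\F_i^{[j-1]}}'/{\F_i^{[j-2]}}' \simeq \F_i^{[j-1]}/\F_i^{[j-2]}$ of the first lemma; and that the section $y$, the determinant of $\B \to \A/\C$, is precisely ${M_i^{[j]}(G^D)}^\vee$ in the form given by the second lemma. Since Proposition~\ref{dual} asserts $x = y$ under the canonical isomorphism, this yields $m_i^{[j]}(G) = {m_i^{[j]}(G^D)}^\vee$. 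Finally, exactly as in the proof in Section~1, one notes that ${m_i^{[j]}(G^D)}^\vee$ and $m_i^{[j]}(G^D)$ induce the same section under the canonical duality isomorphism between a determinant of a quotient and the determinant of the dual of its dual, which converts the statement into the desired equality $m_i^{[j]}(G) = m_i^{[j]}(G^D)$.

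\textbf{Anticipated obstacle.}
The only delicate point is bookkeeping: making sure the four determinant line bundles are matched to $\Li_{G,i}^{[j]}, \Li_{G,i}^{[j-1]}, \Li_{G^D,i}^{[j]}, \Li_{G^D,i}^{[j-1]}$ with the correct powers and duals, since the definitions $\Li_{G^D,i}^{[j]} := \det(\F_i^{[2e-j+1]}/\F_i^{[2e-j]})^\vee$ involve a dual and a reflection $j \mapsto 2e-j+1$ in the indexing. I expect the isomorphisms $\alpha_i^{[j]}$ from the second lemma to absorb this reflection cleanly, so that after transport everything lands in $\A$ as above; verifying that the transport identifies $y$ with ${M_i^{[j]}(G^D)}^\vee$ rather than with its inverse or a twist is the step demanding the most care, but it is routine once the identifications of the two lemmas are invoked.
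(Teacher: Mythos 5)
Your strategy is exactly the paper's (combine the two lemmas with Proposition~\ref{dual}), but the triple $(\A,\B,\C)$ you chose is wrong, and with it the argument collapses. You set $\A = {\F_i^{[j-1]}}'/\F_i^{[j-2]}$, $\B = \F_i^{[j-1]}/\F_i^{[j-2]}$, $\C = {\F_i^{[j-2]}}'/\F_i^{[j-2]}$. The Pappas--Rapoport condition gives $\pi \cdot \F_i^{[j-1]} \subset \F_i^{[j-2]}$, i.e.\ $\F_i^{[j-1]} \subset {\F_i^{[j-2]}}'$, so in your configuration $\B \subset \C$. Consequently the natural map $\B \to \A/\C$, namely $\F_i^{[j-1]}/\F_i^{[j-2]} \to {\F_i^{[j-1]}}'/{\F_i^{[j-2]}}'$, is the \emph{zero} map; it is not $M_i^{[j]}(G)$, whose source by the first lemma is $\F_i^{[j]}/\F_i^{[j-1]}$, not $\F_i^{[j-1]}/\F_i^{[j-2]}$. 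Likewise your map $\C \to \A/\B$, i.e.\ ${\F_i^{[j-2]}}'/\F_i^{[j-2]} \to {\F_i^{[j-1]}}'/\F_i^{[j-1]}$, is a map between sheaves of rank $h_1$, whereas ${M_i^{[j]}(G^D)}^\vee$ is, by the second lemma, a map ${\F_i^{[j-2]}}'/\F_i^{[j-1]} \to {\F_i^{[j-1]}}'/\F_i^{[j]}$ between sheaves of rank $h_1-d_1$. The line bundles do not come out right either: since multiplication by $\pi$ identifies $\det(\A/\C) = \det({\F_i^{[j-1]}}'/{\F_i^{[j-2]}}')$ with $\det(\F_i^{[j-1]}/\F_i^{[j-2]}) = \Li_{G,i}^{[j-1]} = \det \B$, your sheaf $\det(\A/\C)\otimes\det(\B)^{-1}$ is trivial, not $\Li_{G,i}^{[j-1]}(\Li_{G,i}^{[j]})^{-1}$. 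So Proposition~\ref{dual} applied to your triple only yields the vacuous identity $0=0$ and proves neither assertion of the theorem.

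The repair is to quotient all three sheaves by $\F_i^{[j-1]}$ rather than $\F_i^{[j-2]}$, which is what the paper does: take $\A = {\F_i^{[j-1]}}'/\F_i^{[j-1]}$, $\B = \F_i^{[j]}/\F_i^{[j-1]}$, $\C = {\F_i^{[j-2]}}'/\F_i^{[j-1]}$. The inclusions $\F_i^{[j]} \subset {\F_i^{[j-1]}}'$ and ${\F_i^{[j-2]}}' \subset {\F_i^{[j-1]}}'$ hold, the ranks are $d_1$, $h_1-d_1$ inside rank $h_1$, and now $\B \to \A/\C$ is literally the map $\F_i^{[j]}/\F_i^{[j-1]} \to {\F_i^{[j-1]}}'/{\F_i^{[j-2]}}'$ of the first lemma, i.e.\ $M_i^{[j]}(G)$, while $\C \to \A/\B$ is the map ${\F_i^{[j-2]}}'/\F_i^{[j-1]} \to {\F_i^{[j-1]}}'/\F_i^{[j]}$ of the second lemma, i.e.\ ${M_i^{[j]}(G^D)}^\vee$. (Note this also swaps the roles you assigned: it is $\B \to \A/\C$ that gives $m_i^{[j]}(G)$, and $\C \to \A/\B$ that gives the dual section.) With this triple, the remainder of your argument --- matching the four determinant lines with $\Li_{G,i}^{[j-1]}$, $\Li_{G,i}^{[j]}$, $\Li_{G^D,i}^{[j-1]}$, $\Li_{G^D,i}^{[j]}$ via the lemmas and the definitions $\Li_{G^D,i}^{[j]} = \det(\F_i^{[2e-j+1]}/\F_i^{[2e-j]})^\vee$, and passing from ${M_i^{[j]}(G^D)}^\vee$ to $m_i^{[j]}(G^D)$ --- goes through as you wrote it.
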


\begin{proof}
We apply the proposition \ref{dual} to $\A = {\F_i^{[j-1]}}' / \F_i^{[j-1]}$, $\B = \F_i^{[j]} / \F_i^{[j-1]}$ and $\C = {\F_i^{[j-2]}}' / \F_i^{[j-1]}$.

\end{proof}

\subsection{The primitive Hasse invariants $hasse_i$}

We suppose until the rest of the paper that $S$ is either smooth over $k$, or locally the spectrum of a semi-perfect ring (i.e. a ring on which the Frobenius is surjective). \\
Let $1 \leq i \leq f$ be a fixed integer ; recall that the sheaf $\F_i^{[1]}$ is killed by $\pi$, and that the division by $\pi^{e-1}$ induces an isomorphism $\E_i [\pi] \to \E_i / \E_i[\pi^{e-1}]$. We will now define the remaining primitive Hasse invariants (see \cite{R-X} and \cite{Bi}).

\begin{defi}
We define the map $Hasse_i(G)$ as the map
\begin{displaymath}
\xymatrix{
\F_i^{[1]} \ar[r] & \E_i[\pi] \simeq \E_i / \E_i[\pi^{e-1}]  \ar[r]^-{V_i} & (\F_{i-1}^{[e]} / \F_{i-1}^{[e-1]})^{(p)}
}
\end{displaymath}
\end{defi}

\noindent The map $Hasse_i(G)$ is then the composition of the division by $\pi^{e-1}$ and the Verschiebung. The map $Hasse_i(G^D)^\vee$ is the composition
\begin{displaymath}
\xymatrix{
(\F_{i-1}^{[e+1]} / \F_{i-1}^{[e]})^{(p)} \ar[r]^-{F_i} & \E_i[\pi] \simeq \E_i / \E_i[\pi^{e-1}]  \ar[r] & \E_i / \F_{i}^{[2e-1]}
}
\end{displaymath}

\noindent The determinant of the map $Hasse_i(G)$ induces the primitive Hasse invariant $hasse_i(G) \in H^0(S, (\Li_{G,i-1}^{[e]})^p (\Li_{G,i}^{[1]})^{-1})$. Similarly, the determinant of the map $Hasse_i(G^D)^\vee$ induces a section $hasse_i(G^D) \in H^0(S, (\Li_{G^D,i-1}^{[e]})^p (\Li_{G^D,i}^{[1]})^{-1})$.

\begin{defi}
We define $\wF_i^{[e+j]} := V_i^{-1} ((\F_{i-1}^{[j]})^{(p)})$ and $\wF_i^{[j]} := F_i ((\F_{i-1}^{[e+j]})^{(p)})$ for all $0 \leq j \leq e$.
\end{defi}

\noindent We thus have a filtration
$$0 = \wF_i^{[0]} \subset \wF_i^{[1]} \subset \dots \subset \wF_i^{[2e-1]} \subset \wF_i^{[2e]} = \E_i$$
with $\wF_i^{[e]} = \wF_i$. One also easily checks that $\pi \cdot \wF_i^{[j]} \subset \wF_i^{[j-1]}$ for $1 \leq j \leq 2e$. The Frobenius and Verschiebung induce isomorphisms

\begin{displaymath}
F_i : (\F_{i-1}^{[e+j]} / \F_{i-1}^{[e+j-1]})^{(p)} \simeq  \wF_i^{[j]} / \wF_i^{[j-1]}     \qquad  \qquad   V_i : \wF_i^{[e+j]} / \wF_i^{[e+j-1]} \simeq (\F_{i-1}^{[j]} / \F_{i-1}^{[j-1]})^{(p)}
\end{displaymath}

\noindent for all $1 \leq j \leq e$. The sheaves $\wF_i^{[j]}$ and $\wF_i^{[e+j]}$ are then locally free of rank respectively $(h_1 - d_1)j$ and $e(h_1 - d_1) + j d_1$ for $0 \leq j \leq e$. Let $u=p / \pi^e \in O_K^\times$.

\begin{lemm}
Let $1 \leq j \leq e-1$ be an integer. The map
\begin{displaymath}
\xymatrix{
F_i^{-1} (\E_{i} [\pi^{e-j}]) \ar[r]^-{F_i} & \E_{i} [\pi^{e-j}] \simeq \E_i / \E_{i} [\pi^{j}] \ar[r]^-{V_i} & (\E_{i-1} / (\pi^{e-j} \cdot \F_{i-1})  )^{(p)}
}
\end{displaymath}
is the multiplication by $\pi^{e-j} u$.
\end{lemm}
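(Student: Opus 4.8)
Since $S$ has characteristic $p$, the element $p$ acts as zero on $\E$, so the Dieudonné relation $V_iF_i = F_iV_i = p$ degenerates to $V_iF_i = 0$ on $\E$ and carries no information linking the composite back to $x$; equally, both $\pi^e$ and $p$ act as zero, so the unit $u = p/\pi^e$ is invisible on $\E$ itself. This is exactly why the standing hypothesis on $S$ (smooth over $k$, or locally the spectrum of a semi-perfect ring) is invoked. The plan is therefore to lift the whole situation to a flat crystalline model, carry out an elementary division there, and reduce modulo $p$.

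First I would pass to the local situation and use that hypothesis to produce a $\mathbb{Z}_p$-flat lift (resp. PD-thickening) $\tilde S$ of $S$ on which the Dieudonné crystal can be evaluated; set $\tilde\E := \mathbb{D}(G)(\tilde S)$, with Frobenius and Verschiebung satisfying $V_iF_i = F_iV_i = p$. Because $\Oo_{\tilde S}$ is $\mathbb{Z}_p$-flat and $O_K$ is a discrete valuation ring, $\pi$ is a non-zero-divisor on each $\tilde\E_i$, and reduction recovers $\E_i$ together with $F_i$, $V_i$ and the $O_K/p$-action.

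Then comes the computation, which is clean upstairs. Given $x$ in the domain, $F_i(x)\in\E_i[\pi^{e-j}] = \pi^j\E_i$; choosing a lift $\tilde x$ and using $p = \pi^e u$ with $e-j\geq 1$, one checks that $F_i(\tilde x)\in\pi^j\tilde\E_i$, so by $\pi$-torsion-freeness there is a unique $\tilde y$ with $F_i(\tilde x)=\pi^j\tilde y$. Applying $V_i$ gives $\pi^j V_i(\tilde y) = V_iF_i(\tilde x) = p\tilde x = \pi^e u\tilde x$, whence $V_i(\tilde y) = \pi^{e-j}u\tilde x$ exactly, again because $\pi^j$ is a non-zero-divisor. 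This is precisely the assertion ``multiplication by $\pi^{e-j}u$'' before reduction.

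Finally I would reduce modulo $p$. Here $\tilde y$ reduces to a representative of the division of $F_i(x)$ by $\pi^j$, well-defined in $\E_i/\E_i[\pi^j]$, so its image under $V_i$ is well-defined only modulo $V_i(\E_i[\pi^j])$. Using that $\E_i$ is locally free over $\Oo_S\otimes O_K/p$ one has $\E_i[\pi^j] = \pi^{e-j}\E_i$ and $\operatorname{Im} V_i = \F_{i-1}^{(p)}$, hence $V_i(\E_i[\pi^j]) = \pi^{e-j}\F_{i-1}^{(p)}$ --- exactly the subsheaf defining the target quotient, so the third map is well-defined; reducing the identity $V_i(\tilde y) = \pi^{e-j}u\tilde x$ then yields $V_i(y) = \pi^{e-j}ux$ in $(\E_{i-1}/(\pi^{e-j}\F_{i-1}))^{(p)}$, as wanted. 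I expect the one genuinely delicate point to be the crystalline set-up and the compatibility of the lifted $F_i$, $V_i$, the division by $\pi^j$ and the quotient by $\pi^{e-j}\F_{i-1}$ with their reductions; the algebra on the flat lift is routine, and the real content is that lifting makes the unit $u$, invisible mod $p$, appear.
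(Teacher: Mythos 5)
Your strategy is the same as the paper's: work locally, use the standing hypothesis on $S$ to produce a flat PD-thickening (the ring $A_{cris}(R)$ in the semi-perfect case, a smooth lift $R_0$ of $R$ over the Witt vectors $W(k)$ in the smooth case), evaluate the Dieudonn\'e crystal there to get a module $\widetilde{\E}$ on which $\pi$ is a non-zero-divisor and $V_i F_i = p = \pi^e u$, perform the division by $\pi^j$ upstairs, and reduce. The difference is the order in which you divide, and that is where there is a genuine gap. You claim that for a lift $\tilde{x}$ of $x$ one has $F_i(\tilde{x}) \in \pi^j \widetilde{\E}_i$, ``using $p = \pi^e u$''. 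From $F_i(x) \in \pi^j \E_i$ all you can deduce is $F_i(\tilde{x}) \in \pi^j \widetilde{\E}_i + I \widetilde{\E}_i$, where $I$ is the kernel of the thickening: divisibility of the reduction does not imply divisibility of the lift. Your appeal to $p = \pi^e u$ settles this only when $I = (p)$ --- which is indeed the situation in the smooth case, where $I = pR_0$, and in the perfect case, where $A_{cris}(R) = W(R)$ --- but for a general semi-perfect $R$ the PD-ideal $I \subset A_{cris}(R)$ is strictly larger than $(p)$, and the error term $I\widetilde{\E}_i$ is not contained in $\pi^j \widetilde{\E}_i$. Nor can you absorb it by modifying the lift $\tilde{x}$: that only changes $F_i(\tilde{x})$ by elements of $I \cdot F_i(\widetilde{\E}_{i-1}^{(p)})$, not by arbitrary elements of $I \widetilde{\E}_i$. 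So the element $\tilde{y}$ on which your exact identity $V_i(\tilde{y}) = \pi^{e-j}u\tilde{x}$ rests need not exist.

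The paper's proof avoids this by dividing \emph{after} applying $V_i$ rather than before: the element it divides is $V_i(F_i(\tilde{x})) = p\tilde{x} = \pi^e u \tilde{x}$, which is visibly divisible by $\pi^j$ (as $j \leq e-1$) for any lift $\tilde{x}$, with quotient $\pi^{e-j}u\tilde{x}$; no divisibility of $F_i(\tilde{x})$ itself is needed. What remains is then to check that the reduction of $\pi^{e-j}u\tilde{x}$ represents $f(x)$, i.e.\ the compatibility of the upstairs division with the downstairs one modulo the ambiguity $V_i(\E_i[\pi^j]) = \pi^{e-j}\F_{i-1}^{(p)}$ absorbed by the target --- and this is precisely the well-definedness computation of your last paragraph, which therefore survives and is put to its proper use. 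In short: keep your crystalline set-up and your reduction step, but replace ``divide $F_i(\tilde{x})$ by $\pi^j$, then apply $V_i$'' by ``apply $V_i$, obtaining $\pi^e u \tilde{x}$, then divide by $\pi^j$''; as written, your argument only covers the smooth (or perfect) case and does not give the lemma over a general semi-perfect base.
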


\begin{proof}
One can work locally on $S$, so let us suppose that $S = $ Spec $R$ is affine, where $R$ is a ring of characteristic $p$. Let $f$ be the map of the lemma, and let $x \in F_i^{-1} (\E_{i} [\pi^{e-j}])$. \\
If $S$ is locally the spectrum of a semi-perfect ring, then one can suppose that $R$ is semi-perfect. Using $p$-adic Hodge theory, one can then construct a ring $A_{cris}(R)$, which is a $\mathbb{Z}_p$-algebra (see for example \cite{SW} proposition $4.1.3$). It is also equipped with a $PD$ structure compatible with the natural one on $\mathbb{Z}_p$. The prime $p$ is not a zero divisor in $A_{cris}(R)$, and there is a surjective reduction map $A_{cris}(R) \to R$. The Dieudonn\'e module of $G$ gives a free $A_{cris}(R)$-module $M$ such that $M \otimes_{A_{cris}(R)} R \simeq \E$. Let $M^{(p)}$ be the twist of $M$ by the Frobenius on $A_{cris}(R)$ ; the module $M$ is equipped with a Frobenius $F : M^{(p)} \to M$ and a Verschiebung $V : M \to M^{(p)}$, which are linear maps compatible with the maps on $\E$ and satisfy $V \circ F = p$. Moreover, the module $M$ is equipped with an action of $O_K$, so is a $A_{cris} \otimes_{\mathbb{Z}_p} O_K$-module. We refer to \cite{SW} section $4.1$ for more details on the Dieudonn\'e module. Let $x_0 \in M$ be a lift of $x$. Then a lift of $f(x)$ is $V(F(x)) / \pi^j$. Since $V \circ F = p = \pi^e u$, then $\pi^{e-j} u \cdot x_0$ is a lift of $f(x)$. \\
If $S$ is smooth over $k$, then one can assume that the ring $R$ admits a lift $R_0$ which is smooth over the ring of Witt vectors of $k$. Since the reduction map $R_0 \to R$ is $p$-adic, the ring $R_0$ has a canonical PD structure. One can then evaluate the Dieudonn\'e crystal of $G$ on the ring $R_0$ (\cite{BBM} section $3.3$), and the same argument as before gives the result.
\end{proof}

\begin{prop}
We have $\pi^j \cdot \wF_i^{[e+j]} = \wF_i^{[e-j]}$, for all $1 \leq j \leq e-1$.
\end{prop}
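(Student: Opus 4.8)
The plan is to prove the equality of the two subsheaves of $\E_i$ by establishing both inclusions, using the previous lemma as the essential input. First I would unwind the definitions: by construction $\wF_i^{[e+j]} = V_i^{-1}\big((\F_{i-1}^{[j]})^{(p)}\big)$, while $\wF_i^{[e-j]} = F_i\big((\F_{i-1}^{[2e-j]})^{(p)}\big)$, and recall from the construction of the Pappas--Rapoport filtration that $\F_{i-1}^{[2e-j]} = (\pi^{e-j})^{-1}\F_{i-1}^{[j]}$, so that a local section $t$ of $\E_{i-1}^{(p)}$ lies in $(\F_{i-1}^{[2e-j]})^{(p)}$ exactly when $\pi^{e-j}t \in (\F_{i-1}^{[j]})^{(p)}$. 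Two facts will be used repeatedly: since $\pi^e = 0$ on $\E_i$ one has $\E_i[\pi^{e-j}] = \pi^j \E_i$ locally, so every local section of $\E_i[\pi^{e-j}]$ can locally be divided by $\pi^j$; and the condition $\pi\cdot\F_{i-1}^{[k]}\subset\F_{i-1}^{[k-1]}$ gives $\pi^{e-j}\F_{i-1} = \pi^{e-j}\F_{i-1}^{[e]} \subset \F_{i-1}^{[j]}$, hence the containment $(\pi^{e-j}\F_{i-1})^{(p)} \subset (\F_{i-1}^{[j]})^{(p)}$. All statements being local on $S$, I would work over an affine where the required divisions and lifts are available.

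The common mechanism for both inclusions is the following. Whenever a local section $v$ of $\E_i$ can be written both as $v = F_i(t)$ and as $v = \pi^j y$ (which forces $v \in \E_i[\pi^{e-j}]$, since $\pi^{e-j}F_i(t) = F_i(\pi^{e-j}t)$ and $\pi^e = 0$), the hypothesis of the previous lemma is satisfied by $t$, and applying it yields $\pi^{e-j}u\,t \equiv V_i(y) \pmod{(\pi^{e-j}\F_{i-1})^{(p)}}$ inside $(\E_{i-1}/\pi^{e-j}\F_{i-1})^{(p)}$; here I use that the composite of the lemma, evaluated on the representative $\pi^j y$, unwinds to $V_i$ of the class $\mathrm{div}_{\pi^j}(\pi^j y) = [y]$. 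In view of the containment $(\pi^{e-j}\F_{i-1})^{(p)} \subset (\F_{i-1}^{[j]})^{(p)}$ and of $u \in O_K^\times$, this congruence gives the equivalence
$$V_i(y) \in (\F_{i-1}^{[j]})^{(p)} \quad \Longleftrightarrow \quad \pi^{e-j} t \in (\F_{i-1}^{[j]})^{(p)},$$
that is, $y \in \wF_i^{[e+j]}$ if and only if $t \in (\F_{i-1}^{[2e-j]})^{(p)}$.

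With this equivalence both inclusions are immediate. For $\wF_i^{[e-j]} \subset \pi^j\wF_i^{[e+j]}$, I start from $w = F_i(z)$ with $z \in (\F_{i-1}^{[2e-j]})^{(p)}$, note that $\pi^{e-j}z \in (\F_{i-1}^{[j]})^{(p)} \subset \ker F_i$ forces $w \in \E_i[\pi^{e-j}]$, write $w = \pi^j y_0$ locally, and conclude $y_0 \in \wF_i^{[e+j]}$, so $w \in \pi^j\wF_i^{[e+j]}$. For the reverse inclusion I start from $y \in \wF_i^{[e+j]}$, observe $V_i(\pi^j y) = \pi^j V_i(y) = 0$ so that $\pi^j y \in \ker V_i = \mathrm{Im}\,F_i$ and hence $\pi^j y = F_i(t)$ for some local $t$, and then the equivalence gives $t \in (\F_{i-1}^{[2e-j]})^{(p)}$, whence $\pi^j y \in \wF_i^{[e-j]}$. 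The only genuine obstacle is the bookkeeping in the middle step: checking that the relevant sections really land in $\E_i[\pi^{e-j}]$ so that the lemma applies, that the division by $\pi^j$ is the one appearing there, and that the congruence is read in the correct quotient $(\E_{i-1}/\pi^{e-j}\F_{i-1})^{(p)}$; once the containment $(\pi^{e-j}\F_{i-1})^{(p)}\subset(\F_{i-1}^{[j]})^{(p)}$ is recorded, everything else is formal.
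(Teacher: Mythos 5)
Your argument is correct and matches the paper's proof in all essentials: both inclusions are extracted from the same key lemma (the composite of $F_i$, division by $\pi^j$, and $V_i$ is multiplication by $\pi^{e-j}u$), combined with the containment $(\pi^{e-j} \cdot \F_{i-1})^{(p)} \subset (\F_{i-1}^{[j]})^{(p)}$ and the unit $u$, exactly as in the paper. The only deviation is a mild streamlining of the inclusion $\pi^j \cdot \wF_i^{[e+j]} \subset \wF_i^{[e-j]}$: you produce $t$ with $\pi^j y = F_i(t)$ directly from $\ker V_i = \mathrm{Im}\, F_i$ (valid, since $\pi^j V_i(y) = 0$ for $y \in \wF_i^{[e+j]}$) and feed it into the same congruence, which lets you bypass the paper's computation that the kernel of the composite map is $\pi^j(\E_i[\pi^j] + \wF_i) \subset \wF_i^{[e-j]}$.
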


\begin{proof}
Let $U$ be any open of $S$, and let $x \in \wF_i^{[e-j]}(U)$. Then there exists $y \in (\F_{i-1}^{[2e-j]})^{(p)} (U)$ such that $x = F_i(y)$. Since $x$ is killed by $\pi^{e-j}$, there exists $z \in \E_i (U)$ such that $x = \pi^j \cdot z$. The previous lemma implies that $\pi^{e-j} u \cdot y = V_i(z)$ in $(\E_{i-1} / (\pi^{e-j} \cdot \F_{i-1})  )^{(p)} (U)$. But 
$$\pi^{e-j} u \cdot~y \in~\pi^{e-j} \cdot~(\F_{i-1}^{[2e-j]})^{(p)} (U) = (\F_{i-1}^{[j]})^{(p)} (U)$$
and $(\pi^{e-j} \cdot \F_{i-1})^{(p)} (U) \subset (\F_{i-1}^{[j]})^{(p)} (U)$. Thus $V_i(z) \in (\F_{i-1}^{[j]})^{(p)} (U)$ and $z \in \wF_i^{[e+j]} (U)$. This proves the inclusion
$$\wF_i^{[e-j]} \subset \pi^j \cdot \wF_i^{[e+j]} $$ 
Suppose now that $x \in \wF_i^{[e+j]} (U)$. This means that $V_i(x) \in (\F_{i-1}^{[j]})^{(p)} (U) = \pi^{e-j} \cdot (\F_{i-1}^{[2e-j]})^{(p)} (U) $. There exists $y \in  (\F_{i-1}^{[2e-j]})^{(p)} (U)$ such that $V_i(x) = \pi^{e-j} u y$. The element $y$ is in $F_i^{-1} (\E_{i} [\pi^{e-j}]) (U)$, and it follows from the previous proposition that the elements $F_i(y)$ and $\pi^j x$ have the same image by the morphism
$$\E_{i} [\pi^{e-j}] (U) \simeq \E_i / \E_{i} [\pi^{j}] (U) \overset{V_i}{\longrightarrow} (\E_{i-1} / (\pi^{e-j} \cdot \F_{i-1})  )^{(p)} (U)$$
The kernel of this morphism is $\pi^j ( \E_i[\pi^j] + \wF_i) (U) \subset \wF_i^{[e-j]} (U)$. Since $F_i(y)$ is also in $\wF_i^{[e-j]} (U)$, this implies that $\pi^j x \in \wF_i^{[e-j]} (U)$, proving the inclusion in the other direction.
\end{proof}

Using this relation for the flag $(\wF_i^{\bullet})$, we get the following description for the maps $Hasse_i(G)$ and $Hasse_i(G^D)^\vee$.

\begin{prop}
There is an isomorphism $(\F_{i-1}^{[e]} / \F_{i-1}^{[e-1]})^{(p)} \simeq  \E_i[\pi] / \wF_i^{[1]}$. Using this isomorphism, the map $Hasse_i(G)$ is the natural map
$$\F_i^{[1]} \to \E_i[\pi] / \wF_i^{[1]}$$
\end{prop}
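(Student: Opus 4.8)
The plan is to factor the desired isomorphism through the intermediate sheaf $\E_i / \wF_i^{[2e-1]}$ and then to trace the definition of $Hasse_i(G)$ through this factorization. First I would invoke the isomorphism $V_i : \wF_i^{[e+j]} / \wF_i^{[e+j-1]} \simeq (\F_{i-1}^{[j]} / \F_{i-1}^{[j-1]})^{(p)}$ in the case $j = e$. Since $\wF_i^{[2e]} = \E_i$, this reads $V_i : \E_i / \wF_i^{[2e-1]} \simeq (\F_{i-1}^{[e]} / \F_{i-1}^{[e-1]})^{(p)}$, which is precisely the target appearing in the statement. It then remains to identify $\E_i / \wF_i^{[2e-1]}$ with $\E_i[\pi] / \wF_i^{[1]}$.

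For this I would show that multiplication by $\pi^{e-1}$ descends to an isomorphism $\E_i / \wF_i^{[2e-1]} \simeq \E_i[\pi] / \wF_i^{[1]}$. On the one hand, since $\E_i$ is locally free over $\Oo_S \otimes_{k,\sigma^i} O_K/p$ and $O_K/p \simeq k[X]/X^e$ with $\pi \mapsto X$, the map $\pi^{e-1} : \E_i \to \E_i$ has image $\E_i[\pi]$ and kernel $\E_i[\pi^{e-1}] = \pi \E_i$; on the other hand, the previous proposition gives $\pi^{e-1} \cdot \wF_i^{[2e-1]} = \wF_i^{[1]}$. Hence $\pi^{e-1}$ induces a surjection onto $\E_i[\pi] / \wF_i^{[1]}$, and the only point to check is that its kernel is exactly $\wF_i^{[2e-1]}$. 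That kernel is $\wF_i^{[2e-1]} + \E_i[\pi^{e-1}]$, so it suffices to know $\E_i[\pi^{e-1}] \subseteq \wF_i^{[2e-1]}$; but $\E_i[\pi^{e-1}] = \pi \E_i = \pi \cdot \wF_i^{[2e]} \subseteq \wF_i^{[2e-1]}$ by the relation $\pi \cdot \wF_i^{[j]} \subseteq \wF_i^{[j-1]}$. A rank count ($d_1$ on each side) confirms the induced map is an isomorphism, and composing with the $V_i$-isomorphism yields the isomorphism of the statement.

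Finally I would carry out a diagram chase to identify $Hasse_i(G)$. By construction the composite isomorphism $\E_i[\pi] / \wF_i^{[1]} \simeq (\F_{i-1}^{[e]} / \F_{i-1}^{[e-1]})^{(p)}$ sends the class of $\pi^{e-1} \tilde{x}$, for $\tilde{x} \in \E_i$, to $V_i(\tilde{x})$ modulo $(\F_{i-1}^{[e-1]})^{(p)}$. Given $x \in \F_i^{[1]} \subseteq \E_i[\pi]$, its image under the natural map to $\E_i[\pi] / \wF_i^{[1]}$ is the class of $x$; choosing $\tilde{x} \in \E_i$ with $\pi^{e-1} \tilde{x} = x$, which is the division by $\pi^{e-1}$ appearing in the definition, the composite isomorphism sends this class to $V_i(\tilde{x})$ modulo $(\F_{i-1}^{[e-1]})^{(p)}$, and this is exactly $Hasse_i(G)(x)$. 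The one point requiring care, and the main obstacle, is the well-definedness underlying this last step: it rests on the inclusion $\E_i[\pi^{e-1}] \subseteq \wF_i^{[2e-1]}$, equivalently $V_i(\E_i[\pi^{e-1}]) \subseteq (\F_{i-1}^{[e-1]})^{(p)}$, which is the very inclusion that renders both $Hasse_i(G)$ and the map $\pi^{e-1}$ well defined on the relevant quotients. Once this inclusion is secured as above, the chase is routine.
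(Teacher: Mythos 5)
Your proof is correct and takes essentially the same route as the paper: both identify $(\F_{i-1}^{[e]} / \F_{i-1}^{[e-1]})^{(p)}$ with $\E_i / \wF_i^{[2e-1]}$ via the Verschiebung isomorphism at the top graded piece, invoke the preceding proposition (case $j = e-1$, giving $\pi^{e-1} \cdot \wF_i^{[2e-1]} = \wF_i^{[1]}$) to obtain the multiplication-by-$\pi^{e-1}$ isomorphism $\E_i / \wF_i^{[2e-1]} \simeq \E_i[\pi] / \wF_i^{[1]}$, and then unwind $Hasse_i(G)$ as the composition of division by $\pi^{e-1}$, the Verschiebung, the inverse of the Verschiebung, and multiplication by $\pi^{e-1}$, i.e.\ the natural map. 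Your extra verifications (that the kernel of the induced map is exactly $\wF_i^{[2e-1]}$ because $\E_i[\pi^{e-1}] = \pi \E_i \subset \wF_i^{[2e-1]}$, the rank count, and the well-definedness of the division by $\pi^{e-1}$) merely make explicit steps the paper leaves implicit.
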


\begin{proof}
The Verschiebung induce an isomorphism
$$\E_i / \wF_i^{[2e-1]} \simeq (\F_{i-1}^{[e]} / \F_{i-1}^{[e-1]})^{(p)}$$
The multiplication by $\pi^{e-1}$ induces an isomorphism
$$\E_i / \wF_i^{[2e-1]} \simeq \E_i[\pi] / \wF_i^{[1]}$$
The map $Hasse_i(G)$ then induces a map $\F_i^{[1]} \to \E_i[\pi] / \wF_i^{[1]}$, which is the composition of the division by $\pi^{e-1}$, the Verschiebung, the inverse of the Verschiebung, and the multiplication by $\pi^{e-1}$. We thus get the natural map.
\end{proof}

\begin{prop}
There are isomorphisms $(\F_{i-1}^{[e+1]} / \F_{i-1}^{[e]})^{(p)} \simeq \wF_i^{[1]}$, and $\E_i / \F_{i}^{[2e-1]} \simeq \E_i[\pi] / \F_i^{[1]}$. Using these isomorphisms, the map $Hasse_i(G^D)^\vee$ is the natural map 
$$\wF_i^{[1]} \to \E_i[\pi] / \F_i^{[1]}$$
\end{prop}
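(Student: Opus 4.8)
The plan is to mirror the proofs of the earlier propositions in this subsection, producing the claimed isomorphisms by transporting the definition of $Hasse_i(G^D)^\vee$ through the Frobenius and the division-by-$\pi$ maps, exactly as was done for $Hasse_i(G)$. First I would establish the isomorphism $(\F_{i-1}^{[e+1]} / \F_{i-1}^{[e]})^{(p)} \simeq \wF_i^{[1]}$. This is immediate from the definition of the filtration on $\wF_i$: by definition $\wF_i^{[j]} = F_i((\F_{i-1}^{[e+j]})^{(p)})$, and the excerpt already records that the Frobenius induces isomorphisms $F_i : (\F_{i-1}^{[e+j]} / \F_{i-1}^{[e+j-1]})^{(p)} \simeq \wF_i^{[j]} / \wF_i^{[j-1]}$ for all $1 \leq j \leq e$. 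Taking $j=1$ and using $\wF_i^{[0]} = 0$ gives $(\F_{i-1}^{[e+1]} / \F_{i-1}^{[e]})^{(p)} \simeq \wF_i^{[1]}$ directly.

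**The second identification via division by $\pi^{e-1}$.**
For the isomorphism $\E_i / \F_i^{[2e-1]} \simeq \E_i[\pi] / \F_i^{[1]}$, I would use the division-by-$\pi^{e-1}$ map. Recall that multiplication by $\pi^{e-1}$ gives an isomorphism $\E_i / \E_i[\pi^{e-1}] \simeq \E_i[\pi]$, whose inverse is division by $\pi^{e-1}$. I would check that under multiplication by $\pi^{e-1}$, the subsheaf $\F_i^{[2e-1]}/\E_i[\pi^{e-1}]$ of the source maps isomorphically onto $\F_i^{[1]} \subset \E_i[\pi]$. Indeed, $\F_i^{[2e-1]} = (\pi^{e-1})^{-1}\F_i^{[1]}$ by the definition ${\F_i^{[e+j]}} := (\pi^j)^{-1}\F_i^{[e-j]}$ used in the Pappas--Rapoport duality proposition (with $j = e-1$), so multiplication by $\pi^{e-1}$ sends $\F_i^{[2e-1]}$ onto $\F_i^{[1]}$; passing to quotients yields the claimed isomorphism of the cokernels $\E_i / \F_i^{[2e-1]} \simeq \E_i[\pi]/\F_i^{[1]}$.

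**Assembling the natural map.**
With both identifications in hand, I would trace $Hasse_i(G^D)^\vee$ through the diagram. The map was defined as the composition $(\F_{i-1}^{[e+1]} / \F_{i-1}^{[e]})^{(p)} \xrightarrow{F_i} \E_i[\pi] \simeq \E_i/\E_i[\pi^{e-1}] \to \E_i/\F_i^{[2e-1]}$. Precomposing with the inverse of the first isomorphism and postcomposing with the inverse of the second, the Frobenius $F_i$ becomes the identity inclusion $\wF_i^{[1]} \hookrightarrow \E_i[\pi]$ (since $\wF_i^{[1]} = F_i((\F_{i-1}^{[e+1]})^{(p)})$ lies in $\wF_i \subset \E_i[\pi]$), while the division by $\pi^{e-1}$ composed with multiplication by $\pi^{e-1}$ cancels. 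What survives is precisely the natural projection $\wF_i^{[1]} \to \E_i[\pi]/\F_i^{[1]}$ obtained by composing the inclusion with the quotient map. This parallels exactly the computation in the preceding proposition for $Hasse_i(G)$.

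**Anticipated obstacle.**
The main point requiring care is compatibility of the $(p)$-twist with the division-by-$\pi$ operations, and verifying that $\wF_i^{[1]}$ genuinely lands inside $\E_i[\pi]$ rather than merely in $\E_i$; this uses $\pi \cdot \wF_i^{[1]} \subset \wF_i^{[0]} = 0$, which is the relation $\pi \cdot \wF_i^{[j]} \subset \wF_i^{[j-1]}$ already verified in the text. Once these inclusions are confirmed, the identification of the composite with the natural map is a formal diagram chase, so I expect no serious difficulty beyond bookkeeping of the twists.
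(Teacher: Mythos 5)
Your proof is correct and takes essentially the same route as the paper: the first isomorphism is induced by the Frobenius, the second by multiplication by $\pi^{e-1}$ (via $\F_i^{[2e-1]} = (\pi^{e-1})^{-1}\F_i^{[1]}$), and the composite defining $Hasse_i(G^D)^\vee$ then collapses to the natural map $\wF_i^{[1]} \to \E_i[\pi]/\F_i^{[1]}$ because the inverse of the Frobenius cancels with $F_i$ and the division by $\pi^{e-1}$ cancels with the multiplication by $\pi^{e-1}$. The additional checks you flag (such as $\wF_i^{[1]} \subset \E_i[\pi]$, from $\pi \cdot \wF_i^{[1]} \subset \wF_i^{[0]} = 0$) are details the paper leaves implicit.
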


\begin{proof}
The first isomorphism is induced by the Frobenius, and the second by the multiplication by $\pi^{e-1}$. Using these isomorphisms, the map $Hasse_i(G^D)^\vee$ induces a map
$$\wF_i^{[1]} \to \E_i[\pi] / \F_i^{[1]}$$
which is by definition the composition of the inverse of the Frobenius, the Frobenius, the division by $\pi^{e-1}$, and the multiplication by $\pi^{e-1}$. This is then the natural map.
\end{proof}

\begin{theo}
There is an isomorphism $(\Li_{G,i-1}^{[e]})^p (\Li_{G,i}^{[1]})^{-1} \simeq (\Li_{G^D,i-1}^{[e]})^p (\Li_{G^D,i}^{[1]})^{-1}$. Moreover, we have $hasse_i(G) = hasse_i(G^D)$ using the previous isomorphism.
\end{theo}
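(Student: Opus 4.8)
The plan is to prove this theorem in exactly the same way as the two previous duality statements (the theorems of \S1 and \S2), namely by a single application of Proposition \ref{dual}. The two propositions immediately preceding the theorem have already done the essential geometric work: they rewrite $Hasse_i(G)$ as the natural map $\F_i^{[1]} \to \E_i[\pi]/\wF_i^{[1]}$, and $Hasse_i(G^D)^\vee$ as the natural map $\wF_i^{[1]} \to \E_i[\pi]/\F_i^{[1]}$. This is precisely the configuration of Proposition \ref{dual}, so I would apply it with $\A = \E_i[\pi]$, $\B = \F_i^{[1]}$ and $\C = \wF_i^{[1]}$.

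First I would check the hypotheses of Proposition \ref{dual}. Using $\E_i \simeq (O_K/p)^{h_1}$ together with $O_K/p \simeq k[X]/X^e$ (with $\pi \mapsto X$), the kernel of multiplication by $\pi$ is locally free of rank $h_1$, so $\A = \E_i[\pi]$ has rank $r = h_1$. Since $\pi \cdot \F_i^{[1]} \subset \F_i^{[0]} = 0$ and $\pi \cdot \wF_i^{[1]} \subset \wF_i^{[0]} = 0$, both $\F_i^{[1]}$ and $\wF_i^{[1]}$ are contained in $\E_i[\pi]$; being locally direct summands of $\E_i$, they are locally direct summands of $\E_i[\pi]$, of ranks $d_1$ and $h_1 - d_1$, with locally free quotients. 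With $s = d_1$, the required inequality $0 < s < r$ is exactly the hypothesis $0 < d_1 < h_1$.

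Then I would match the line bundles. On the $G$ side, the natural map $\B \to \A/\C$ is $Hasse_i(G)$, whose determinant lies in $\det(\E_i[\pi]/\wF_i^{[1]}) \otimes \det(\F_i^{[1]})^{-1}$; using $\det(\F_i^{[1]}) = \Li_{G,i}^{[1]}$, the isomorphism $(\F_{i-1}^{[e]}/\F_{i-1}^{[e-1]})^{(p)} \simeq \E_i[\pi]/\wF_i^{[1]}$, and $\det(\mathcal{M}^{(p)}) \simeq (\det \mathcal{M})^p$, this target is $(\Li_{G,i-1}^{[e]})^p (\Li_{G,i}^{[1]})^{-1}$, so the section $y$ of Proposition \ref{dual} is $hasse_i(G)$. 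On the $G^D$ side, the natural map $\C \to \A/\B$ is $Hasse_i(G^D)^\vee$, whose determinant lies in $\det(\E_i[\pi]/\F_i^{[1]}) \otimes \det(\wF_i^{[1]})^{-1}$; using $\E_i/\F_i^{[2e-1]} \simeq \E_i[\pi]/\F_i^{[1]}$, the Frobenius isomorphism $\wF_i^{[1]} \simeq (\F_{i-1}^{[e+1]}/\F_{i-1}^{[e]})^{(p)}$, and the definitions $\Li_{G^D,i}^{[1]} = \det(\E_i/\F_i^{[2e-1]})^\vee$ and $\Li_{G^D,i-1}^{[e]} = \det(\F_{i-1}^{[e+1]}/\F_{i-1}^{[e]})^\vee$, this target becomes $(\Li_{G^D,i-1}^{[e]})^p (\Li_{G^D,i}^{[1]})^{-1}$. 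As in the proof of the first theorem, $hasse_i(G^D)^\vee$ and $hasse_i(G^D)$ define the same section under the canonical dualization isomorphism, so the section $x$ is $hasse_i(G^D)$.

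Proposition \ref{dual} then yields both the isomorphism $\det(\A/\B)\otimes\det(\C)^{-1} \simeq \det(\A/\C)\otimes\det(\B)^{-1}$ and the equality $x = y$, which together give the stated isomorphism of invertible sheaves and the equality $hasse_i(G) = hasse_i(G^D)$. The genuinely delicate input, the symmetry relation $\pi^j \cdot \wF_i^{[e+j]} = \wF_i^{[e-j]}$ whose proof required crystalline Dieudonn\'e theory and the hypotheses on $S$, has already been absorbed into the two preceding propositions that produce the factorizations through $\E_i[\pi]$. So I expect the only remaining obstacle to be the careful bookkeeping of the determinant line bundles, together with the consistent handling of the Frobenius twist $\det(\mathcal{M}^{(p)}) \simeq (\det \mathcal{M})^p$ and of the duals, which is routine once the maps are placed in the setting of Proposition \ref{dual}.
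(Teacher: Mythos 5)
Your proposal is correct and is exactly the paper's proof: the paper also applies Proposition \ref{dual} with $\A = \E_i[\pi]$, $\B = \F_i^{[1]}$, $\C = \wF_i^{[1]}$, relying on the two preceding propositions to identify $Hasse_i(G)$ and $Hasse_i(G^D)^\vee$ with the natural maps $\B \to \A/\C$ and $\C \to \A/\B$. Your verification of the rank hypotheses and the determinant line bundle identifications (including the Frobenius twist and dualization) simply makes explicit the bookkeeping the paper leaves implicit.
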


\begin{proof}
We apply the proposition \ref{dual} to $\A = \E_i[\pi]$, $\B = \F_i^{[1]}$ and $\C = \wF_i^{[1]}$.

\end{proof}

\subsection{Partial Hasse invariants}

The Verschiebung respects the filtration $\F_{\bullet}^{[\bullet]}$. More precisely, we have the following result.

\begin{prop}
Let $1 \leq i \leq f$, and $1 \leq j \leq e$. Then $V_i$ induces a map
$$\F_i^{[j]} \to (\F_{i-1}^{[j]})^{(p)}$$
\end{prop}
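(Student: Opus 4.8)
The plan is to translate the statement about $V_i$ into an inclusion of subbundles of $\E_i$, using the conjugate filtration built in the previous subsection. Recall that $\wF_i^{[e+j]}$ was defined precisely as $V_i^{-1}\big((\F_{i-1}^{[j]})^{(p)}\big)$. Consequently the assertion $V_i(\F_i^{[j]}) \subseteq (\F_{i-1}^{[j]})^{(p)}$ is equivalent to the inclusion $\F_i^{[j]} \subseteq \wF_i^{[e+j]}$, and this is the form I would establish. The case $j=e$ is then immediate: $(\F_{i-1}^{[e]})^{(p)} = \F_{i-1}^{(p)} = \operatorname{Im} V_i$, so that $\wF_i^{[2e]} = V_i^{-1}(\operatorname{Im}V_i) = \E_i \supseteq \F_i^{[e]}$ and there is nothing to prove.

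For $1 \leq j \leq e-1$ I would prove the stronger inclusion $\E_i[\pi^j] \subseteq \wF_i^{[e+j]}$. This suffices, since $\pi \cdot \F_i^{[m]} \subseteq \F_i^{[m-1]}$ iterated $j$ times gives $\pi^j \cdot \F_i^{[j]} \subseteq \F_i^{[0]} = 0$, hence $\F_i^{[j]} \subseteq \E_i[\pi^j]$. The tool is the relation $\pi^j \cdot \wF_i^{[e+j]} = \wF_i^{[e-j]}$ from the preceding proposition: multiplication by $\pi^j$ furnishes a surjection of locally free sheaves $\wF_i^{[e+j]} \to \wF_i^{[e-j]}$. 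A surjection of locally free sheaves is surjective on every fibre, so its kernel is again a subbundle; this kernel is exactly $\wF_i^{[e+j]} \cap \E_i[\pi^j]$, which is therefore a subbundle of $\E_i$.

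The conclusion is then a rank count. Using the ranks recorded earlier, $\operatorname{rank}\wF_i^{[e+j]} = e(h_1-d_1) + j d_1 = e h_1 - (e-j)d_1$ and $\operatorname{rank}\wF_i^{[e-j]} = (e-j)(h_1-d_1)$, so the kernel has rank $\big(e h_1 - (e-j)d_1\big) - (e-j)(h_1-d_1) = j h_1$. On the other hand $\E_i$ is locally free of rank $h_1$ over $\Oo_S \otimes_{k,\sigma^i} O_K/p \simeq \Oo_S[X]/X^e$, whence $\E_i[\pi^j]$ is a subbundle of rank $j h_1$. Thus $\wF_i^{[e+j]} \cap \E_i[\pi^j]$ and $\E_i[\pi^j]$ are subbundles of $\E_i$ of equal rank with the former contained in the latter, so they coincide; this yields $\E_i[\pi^j] \subseteq \wF_i^{[e+j]}$ and hence $\F_i^{[j]} \subseteq \wF_i^{[e+j]}$, which is what we wanted.

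I expect the only delicate point to be bookkeeping about subbundles rather than any genuine difficulty: the step "inclusion of subbundles of equal rank implies equality" must be applied to honest local direct summands, so one should verify that $\E_i[\pi^j]$ and $\wF_i^{[e+j]}$ are subbundles and that multiplication by $\pi^j$ restricted to $\wF_i^{[e+j]}$ is a genuine surjection of bundles (which requires $\pi^j \cdot \wF_i^{[e+j]} = \wF_i^{[e-j]}$ as sheaves, not merely generically, exactly as supplied by the previous proposition). Once the reformulation $\F_i^{[j]} \subseteq \wF_i^{[e+j]}$ is in place, all the needed facts are already available and the proof is short.
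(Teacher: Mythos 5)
Your argument is correct, but it takes a genuinely different and much heavier route than the paper. The paper's proof is a two-line direct computation: since $\E_i$ is locally free over $\Oo_S \otimes O_K/p$, one has $\F_i^{[j]} \subset \E_i[\pi^j] = \pi^{e-j}\cdot\E_i$; then $O_K$-linearity of $V_i$ together with $V_i(\E_i) \subset (\F_{i-1}^{[e]})^{(p)}$ gives $V_i(\F_i^{[j]}) \subset \pi^{e-j}\cdot(\F_{i-1}^{[e]})^{(p)} \subset (\F_{i-1}^{[j]})^{(p)}$, the last inclusion by iterating the Pappas--Rapoport condition $e-j$ times. Note that this actually proves your stronger claim $\E_i[\pi^j] \subseteq \wF_i^{[e+j]}$ directly, with no rank count. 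By contrast, you derive it from the periodicity relation $\pi^j\cdot\wF_i^{[e+j]} = \wF_i^{[e-j]}$, whose proof in the paper rests on the crystalline lemma (lifting to $A_{cris}(R)$ or to a smooth lift) and hence on the standing hypothesis that $S$ is smooth over $k$ or locally semi-perfect. Your proof is therefore valid under the paper's assumptions in force from subsection $3.2$ onward, but it obscures the fact that this proposition is elementary and holds for an arbitrary base with the Pappas--Rapoport condition, with no crystalline input; this matters if one wants to isolate exactly which statements require the hypothesis on $S$. Your bookkeeping does go through: the kernel of the surjection $\pi^j : \wF_i^{[e+j]} \to \wF_i^{[e-j]}$ is locally split since the target is locally free; $\wF_i^{[e+j]}$ is a local direct summand of $\E_i$ because $\E_i/\wF_i^{[e+j]} \simeq (\F_{i-1}^{[e]}/\F_{i-1}^{[j]})^{(p)}$ is locally free; and the equal-rank inclusion $K \subseteq \E_i[\pi^j]$ of local direct summands forces equality fiberwise and then globally by Nakayama. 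So nothing is wrong, and your rank arithmetic ($jh_1$ on both sides) is exact --- but the direct $O_K$-linearity argument is both shorter and unconditional.
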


\begin{proof}
We have $\F_i^{[j]} \subset \E_i [\pi^j] = \pi^{e-j} \cdot \E_i$. Since $V_i$ sends $\E_i$ into $(\F_{i-1}^{[e]})^{(p)}$, it sends $\F_i^{[j]}$ into
$$\pi^{e-j} \cdot (\F_{i-1}^{[e]})^{(p)} \subset (\F_{i-1}^{[j]})^{(p)}$$
\end{proof}

\noindent The Verschiebung then induces maps
$$Ha_i^{[j]} (G) : \F_i^{[j]} / \F_i^{[j-1]} \to (\F_{i-1}^{[j]} / \F_{i-1}^{[j-1]})^{(p)}$$
for $1 \leq i \leq f$ and $1 \leq j \leq e$. The following proposition was proved in \cite{R-X} in the case of the Hilbert modular variety.

\begin{prop} \label{prod}
Let $1 \leq i \leq f$, and $1 \leq j \leq e$. We have the equality
$$Ha_i^{[j]} (G) =  (M_{i-1}^{[j+1]})^{(p)}  \circ \dots \circ (M_{i-1}^{[e]})^{(p)} \circ Hasse_i (G) \circ M_i^{[2]} (G)  \dots  \circ M_i^{[j]} (G)$$
\end{prop}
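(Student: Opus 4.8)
The plan is to unwind every map appearing in the composite and to follow the image of a local section $x$ of $\F_i^{[j]}$ all the way through, comparing the result with $Ha_i^{[j]}(G)$, which by definition sends the class of $x$ to the class of $V_i(x)$ in $(\F_{i-1}^{[j]} / \F_{i-1}^{[j-1]})^{(p)}$ (this target makes sense by the proposition asserting $V_i(\F_i^{[j]}) \subset (\F_{i-1}^{[j]})^{(p)}$). Throughout I would use the Pappas-Rapoport inclusions $\pi^a \cdot \F_{i-1}^{[b]} \subset \F_{i-1}^{[b-a]}$, the identities $\pi^a \E_i = \E_i[\pi^{e-a}]$, the relation $V_i(\E_i) = (\F_{i-1}^{[e]})^{(p)}$, and the $O_K$-linearity of $V_i$.

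First I would identify the two outer blocks of $M$-maps with multiplications by powers of $\pi$. Since each $M_i^{[\ell]}(G) \colon \F_i^{[\ell]} / \F_i^{[\ell-1]} \to \F_i^{[\ell-1]} / \F_i^{[\ell-2]}$ is induced by multiplication by $\pi$, the composite $M_i^{[2]}(G) \circ \dots \circ M_i^{[j]}(G)$ carries the class of $x$ to $\pi^{j-1} x \in \F_i^{[1]}$, which is well defined because $\pi^{j-1} \cdot \F_i^{[j-1]} \subset \F_i^{[0]} = 0$. Likewise, the Frobenius twist $(M_{i-1}^{[j+1]})^{(p)} \circ \dots \circ (M_{i-1}^{[e]})^{(p)}$ is multiplication by $\pi^{e-j}$ from $(\F_{i-1}^{[e]} / \F_{i-1}^{[e-1]})^{(p)}$ to $(\F_{i-1}^{[j]} / \F_{i-1}^{[j-1]})^{(p)}$. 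The boundary cases $j=1$ and $j=e$ are covered by reading the corresponding block as an empty composite.

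Next I would run $Hasse_i(G)$ through the middle. Applied to $\pi^{j-1} x \in \F_i^{[1]} \subset \E_i[\pi]$, the division by $\pi^{e-1}$ produces a local section $z$ of $\E_i / \E_i[\pi^{e-1}]$ characterized by $\pi^{e-1} z = \pi^{j-1} x$, and $Hasse_i(G)$ then returns the class of $V_i(z)$ in $(\F_{i-1}^{[e]} / \F_{i-1}^{[e-1]})^{(p)}$. Composing with the outer block, the total image of $[x]$ is the class of $\pi^{e-j} V_i(z) = V_i(\pi^{e-j} z)$ in $(\F_{i-1}^{[j]} / \F_{i-1}^{[j-1]})^{(p)}$, using the $O_K$-linearity of $V_i$. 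The crux is to compare $V_i(\pi^{e-j} z)$ with $V_i(x)$: from $\pi^{e-1} z = \pi^{j-1} x$ one gets $\pi^{j-1}(\pi^{e-j} z - x) = 0$, and since $z$ is only defined modulo $\E_i[\pi^{e-1}] = \pi \E_i$ the element $\pi^{e-j} z$ is well defined modulo $\E_i[\pi^{j-1}] = \pi^{e-j+1} \E_i$; in both readings $\pi^{e-j} z - x$ is a local section of $\pi^{e-j+1} \E_i$. Then $V_i$ sends it into $\pi^{e-j+1} (\F_{i-1}^{[e]})^{(p)} \subset (\F_{i-1}^{[j-1]})^{(p)}$, so $V_i(\pi^{e-j} z) \equiv V_i(x)$ modulo $(\F_{i-1}^{[j-1]})^{(p)}$, and the total image of $[x]$ is exactly $Ha_i^{[j]}(G)([x])$.

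I expect the main obstacle to be purely organizational: threading the section $x$ through the chain of isomorphisms concealed in the definitions of the $M$-maps, of $Hasse_i(G)$ (the division by $\pi^{e-1}$), and of $Ha_i^{[j]}(G)$, and checking at each stage that the chosen representatives are compatible and that the final class is independent of all choices, in particular of the ambiguity of $z$ modulo $\pi \E_i$. The only structural ingredients are the $O_K$-linearity of $V_i$ and the repeated use of the Pappas-Rapoport inclusions; everything else is a careful composition of multiplications and divisions by powers of $\pi$ together with a single application of $V_i$.
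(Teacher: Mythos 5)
Your proof is correct and takes essentially the same route as the paper's: you identify the two outer blocks of $M$-maps with multiplication by $\pi^{j-1}$ and $\pi^{e-j}$, unwind $Hasse_i(G)$ as the division by $\pi^{e-1}$ followed by $V_i$, and let the powers of $\pi$ cancel via the $O_K$-linearity of $V_i$. The only difference is that you make explicit the well-definedness checks (e.g.\ that the ambiguity of $z$ modulo $\pi \E_i$ is absorbed into $(\F_{i-1}^{[j-1]})^{(p)}$) which the paper's two-line proof leaves implicit.
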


\begin{proof}
The map on the right-hand side is the composition of the multiplication by $\pi^{j-1}$, the map $Hasse_i(G)$, and the multiplication by $\pi^{e-j}$. Since $Hasse_i(G)$ is the composition of the division by $\pi^{e-1}$ and the Verschiebung, the result follows.
\end{proof}

\noindent Let $1 \leq i \leq f$, and $1 \leq j \leq e$. Taking the determinant of the previous map, one has a section $ha_i^{[j]} (G) \in H^0(S, (\Li_{G,i-1}^{[j]})^p (\Li_{G,i}^{[j]})^{-1})$. These sections are called the partial Hasse invariants. Similarly, one has a map induced by the Frobenius
$$Ha_i^{[j]} (G^D)^\vee : (\F_{i-1}^{[2e+1-j]} / \F_{i-1}^{[2e-j]})^{(p)} \to \F_{i}^{[2e+1-j]} / \F_{i}^{[2e-j]}$$
and a section $ha_i^{[j]} (G^D) \in H^0(S, (\Li_{G^D,i-1}^{[j]})^p (\Li_{G^D,i}^{[j]})^{-1})$.

\begin{theo}
Let $1 \leq i \leq f$, and $1 \leq j \leq e$. There is an isomorphism $(\Li_{G,i-1}^{[j]})^p (\Li_{G,i}^{[j]})^{-1} \simeq~(\Li_{G^D,i-1}^{[j]})^p (\Li_{G^D,i}^{[j]})^{-1}$. With this isomorphism, we have $ha_i^{[j]} (G) = ha_i^{[j]} (G^D)$.
\end{theo}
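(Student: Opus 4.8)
The plan is to deduce this statement formally from the compatibility of the primitive Hasse invariants already established, using the factorization of the partial Hasse map in Proposition \ref{prod}. The key observation is that the determinant is multiplicative over compositions, so taking determinants in Proposition \ref{prod} expresses $ha_i^{[j]}(G)$ as a product of primitive Hasse invariants, each of which has been shown in the previous two subsections to be compatible with the duality.

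More precisely, first I would take determinants in the equality
$$Ha_i^{[j]}(G) = (M_{i-1}^{[j+1]})^{(p)} \circ \dots \circ (M_{i-1}^{[e]})^{(p)} \circ Hasse_i(G) \circ M_i^{[2]}(G) \circ \dots \circ M_i^{[j]}(G)$$
to obtain
$$ha_i^{[j]}(G) = \prod_{k=j+1}^{e} \left(m_{i-1}^{[k]}(G)\right)^{(p)} \cdot hasse_i(G) \cdot \prod_{k=2}^{j} m_i^{[k]}(G).$$
Since $G^D$ also satisfies the Pappas-Rapoport condition by the Proposition in subsection 3.1, Proposition \ref{prod} applies verbatim to $G^D$, yielding the analogous factorization of $ha_i^{[j]}(G^D)$ in terms of the primitive Hasse invariants $m_i^{[k]}(G^D)$ and $hasse_i(G^D)$.

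Next I would assemble the isomorphism of invertible sheaves by tensoring together the individual isomorphisms supplied by the earlier theorems: for each $2 \leq k \leq j$ the isomorphism $\Li_{G,i}^{[k-1]}(\Li_{G,i}^{[k]})^{-1} \simeq \Li_{G^D,i}^{[k-1]}(\Li_{G^D,i}^{[k]})^{-1}$; the isomorphism $(\Li_{G,i-1}^{[e]})^p(\Li_{G,i}^{[1]})^{-1} \simeq (\Li_{G^D,i-1}^{[e]})^p(\Li_{G^D,i}^{[1]})^{-1}$ for the $hasse$ term; and for each $j+1 \leq k \leq e$ the $p$-th power of the isomorphism for $m_{i-1}^{[k]}$. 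The source of this tensor product telescopes: the $m_{i-1}$ factors contribute $(\Li_{G,i-1}^{[j]})^p (\Li_{G,i-1}^{[e]})^{-p}$, the $hasse$ factor contributes $(\Li_{G,i-1}^{[e]})^p (\Li_{G,i}^{[1]})^{-1}$, and the $m_i$ factors contribute $\Li_{G,i}^{[1]}(\Li_{G,i}^{[j]})^{-1}$; after cancellation this is exactly $(\Li_{G,i-1}^{[j]})^p(\Li_{G,i}^{[j]})^{-1}$, and likewise for $G^D$. The tensor product of these isomorphisms is therefore the desired isomorphism, and since each factor of $ha_i^{[j]}(G)$ is carried to the corresponding factor of $ha_i^{[j]}(G^D)$, the two products agree.

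The statement is thus essentially a formal consequence of the preceding results, and I do not expect any serious obstacle. The only point requiring care is the bookkeeping of the telescoping tensor product, together with checking that the Frobenius twist behaves compatibly: one needs $(m_{i-1}^{[k]}(G))^{(p)} = (m_{i-1}^{[k]}(G^D))^{(p)}$, which follows from $m_{i-1}^{[k]}(G) = m_{i-1}^{[k]}(G^D)$ by functoriality of the twist, and one must match the $p$-th powers of the sheaf isomorphisms with the $(p)$-twisted sections appearing in the factorization.
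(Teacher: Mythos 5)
Your proposal is correct and follows essentially the same route as the paper: the paper likewise writes $(\Li_{G,i-1}^{[j]})^p (\Li_{G,i}^{[j]})^{-1}$ as the telescoping product of the primitive Hasse sheaves, invokes the duality compatibility already proved for each factor to get the sheaf isomorphism, and uses Proposition \ref{prod} to factor $ha_i^{[j]}(G)$ into primitive Hasse invariants and conclude. Your version merely spells out the bookkeeping (the explicit product formula, the application of Proposition \ref{prod} to $G^D$, and the Frobenius-twist compatibility) that the paper leaves implicit.
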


\begin{proof}
We have the following equality
$$(\Li_{G,i-1}^{[j]})^p (\Li_{G,i}^{[j]})^{-1} = \prod_{k=0}^{e-j-1} \left(\Li_{G,i-1}^{[j+k]} (\Li_{G,i-1}^{[j+k+1]})^{-1}\right)^p  (\Li_{G,i-1}^{[e]})^p (\Li_{G,i}^{[1]})^{-1} \prod_{k=1}^{j-1} \left(\Li_{G,i}^{[k]} (\Li_{G,i}^{[k+1]})^{-1}\right) $$
The compatibility with the duality has already been proved for the invertible sheaves $\Li_{G,k}^{[l-1]} (\Li_{G,k}^{[l]})^{-1}$ and $(\Li_{G,k-1}^{[e]})^p (\Li_{G,k}^{[1]})^{-1}$ for $1 \leq k \leq f$ and $2 \leq l \leq e$, hence the result for the sheaf $(\Li_{G,i-1}^{[j]})^p (\Li_{G,i}^{[j]})^{-1} $. \\
The proposition $\ref{prod}$ proves that the section $ha_i^{[j]} (G)$ can be expressed as a product of the sections $m_k^{[l]} (G)$ and $hasse_k (G)$, with $1 \leq k \leq f$ and $2 \leq l \leq e$. Since the compatibility with the duality has been proved for these sections, the result follows.

\end{proof}

\bibliographystyle{amsalpha}

\end{document}